\newtheorem{Theorem}{Theorem}[section]
\newtheorem{Lemma}[Theorem]{Lemma}
\newtheorem{Corollary}[Theorem]{Corollary}
\newtheorem{Proposition}[Theorem]{Proposition}
\newtheorem{Definition}[Theorem]{Definition}
\newtheorem{Remark}[Theorem]{Remark}
\def\V{\mbox{Var}}
\def\R\re
\def\V{\bf V}
\def \re{{\mathbb R}}
\def \0{\lambda_{0}}
\begin{document}
\title[Solutions of the Yamabe equation]{Multiplicity results for the  Yamabe equation by Lusternik-Schnirelmann theory}

\author[J. Petean]{Jimmy Petean}\thanks{The author is supported by grant 220074 of Fondo Sectorial de Investigaci\'{o}n para la Educaci\'{o}n SEP-CONACYT}
\address{CIMAT  \\
         A.P. 402, 36000 \\
          Guanajuato. Gto. \\
          M\'exico.}
\email{jimmy@cimat.mx}




\begin{abstract} Let $(M,g)$ be any closed Riemannianan manifold  and $(N,h)$ be a Riemannian manifold of
constant positive scalar curvature. We prove that the Yamabe equation on the Riemannian  product $(M\times N , g + \delta h)$ has at
least ${\it Cat} (M) +1 $ solutions for $\delta$ small enough, where ${\it Cat} (M)$ denotes the Lusternik-Schnirelmann-category of $M$. 
$Cat(M)$ of the solutions obtained  have energy arbitrarily close to the minimum.

\end{abstract}

\maketitle

\section{Introduction}Let $(W^{k},g)$ be a closed Riemannian 
manifold of dimension $k \geq 3$. A metric $\overline{g} = f^{p_k -2} g$ conformal to 
$g$ has constant scalar curvature $\mu \in \re$ if and only if the positive  function  $f$ satisfies 
the {\it Yamabe equation} corresponding to $g$:

\begin{equation}
-a_k \Delta_g f + {\bf s}_g f = \mu f^{p_k -1}.
\end{equation}

\noindent
Here $a_k =\frac{4(k-1)}{k-2}$, $p_k = \frac{2k}{k-2}$ is the Sobolev critical exponent and
${\bf s}_g$ denotes the scalar curvature of $g$.

Solutions of the Yamabe equations are critical points of the Hilbert-Einstein functional ${\bf S}$ restricted
to $[g]$,  the conformal class of $g$. If we write $\overline{g} \in [g]$ as  $\overline{g} = f^{p_k -2} g$, we obtain the expression

$${\bf S} (\overline{g} ) = Y_g (f) = \frac{\int_W a_k  \|  \nabla f \|^2 + {\bf s}_g f^2 \  dv_g}{\left( \int_W f^{p_k} \  dv_g \right)^{\frac{2}{p_k}}} ,$$

\noindent
where $dv_g$ denotes the volume element associated to $g$. We call $Y_g$ the $g$-Yamabe functional.

It is a fundamental, well known result,  that for any metric $g$  the infimum of the 
$g$-Yamabe functional  is  achieved (by a smooth positive function)  and therefore there is always at least one (positive)  solution of the Yamabe
equation. We call the infimum the {\it Yamabe constant} of the conformal class $[g]$ and denote it by $Y(M,[g])$.
 When $Y(M,[g]) \leq 0 $  the Yamabe equation has a unique solution,  up to homothecies. 
But in the positive case  there are in general multiple
solutions. The first example of multiplicity to mention, which actually plays  a fundamental part of the theory, is the constant curvature
metric on the sphere: it has a noncompact family of conformal transformations which induce a noncompact family of
solutions of the Yamabe equation (although all the corresponding metrics are of course isometric). Simon  Brendle \cite{Brendle} constructed other metrics
on spheres of high dimensions for which the space of solutions of the Yamabe equation is noncompact. Another interesting general multiplicity result
proved by  Daniel Pollack in \cite{Pollack} is 
that every conformal class with positive Yamabe constant can be $C^0$- approximated
by a conformal class with an arbitrarily large number of (non-isometric) metrics of
constant scalar curvature. 

In this article we will consider Riemannian products of closed manifolds $(M^n ,g)$, $(N^m ,h)$.  Let  $(W^k , g_{\delta} )= (M\times N , g + \delta h)$, where
$k=m+n$ and $\delta \in \re_{>0}.$ This particular case is interesting since it plays an important
role in the study of the Yamabe invariant of a closed manifold, which is defined as the supremum of the Yamabe constants 
(over the family of conformal classes of metrics on the manifold). Many multiplicity results have been obtained for solutions of the Yamabe 
equation corresponding to $g_{\delta}$ in certain particular cases, all of them giving solutions which depend on only 
one of the variables. In all these previous results it is assumed that the scalar curvature of $g$ is also constant and the
results are obtained by applying bifurcation theory. Note that when studying solutions which depend on only one of the variables 
que equation is subcritical, since $p_{m+n} < p_n$.

In the case $(M,g)= (S^n ,  g_0^n )$, where 
$g_0^n $ is the metric of constant curvature 1 on the sphere,
radial solutions of the resulting subcritical  equation have been obtained by Qinian Jin, YanYan Li and Haoyuan  Xu in \cite{Yan}. 
The authors prove that there is a sequence of  positive numbers
$\delta_i \rightarrow 0$ such that for $\delta < \delta_i$, the Yamabe equation corresponding to $g_{\delta}$ 
has at least $i$ different solutions, which are radial functions on $S^n$. 
The authors obtain this result by showing that the $\delta_i$'s are bifurcation instants (local bifurcation) and
then using the global bifurcation theory of Rabinowitz to prove  that the branches of solutions appearing at these bifurcation instants persist to give solutions for every 
$\delta < \delta_i$. Partial results in the same case were obtained by the author in \cite{Petean} using elementary methods. 
The same global bifurcation     techniques were used in  \cite{Henry} to obtain  solutions which are not radial but whose level sets
are any given family of isoparametric hypersurfaces in the sphere.  The local bifurcation theory for the
general case was treated by Levi Lopes de Lima, Paolo Piccione and Michela Zedda in \cite{LPZ2, Piccione}. The authors  prove (under some mild nondegeneracy assumption) that there is a sequence $\delta_i 
\rightarrow 0$ which are degenerate instants, i.e. there exists at least one other solution of the equation
for values of $\delta$ close enough to any of the $\delta_i$'s. Other related results were obtained in \cite{BP1, BP2, Petean2}.

In this article we will obtain the first multiplicity results when the scalar curvature of $g$ is not necessarily constant. Moreover, the results will
be  global in the sense that we obtain solutions for all $\delta >0$ small enough:

\begin{Theorem} Let $(M,g)$ be any closed Riemannian manifold and $(N,h)$ be a Riemannian manifold
of constant positive scalar curvature. There exists $\delta_0 >0$ such that for any $\delta < \delta_0$ there
are at least $Cat(M) +1 $ different solutions of the Yamabe equation for $g+ \delta h$ on $M\times N$.

\end{Theorem}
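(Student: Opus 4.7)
The plan is to reduce the problem to a singularly perturbed subcritical equation on $M$ and then apply the Benci--Cerami Lusternik--Schnirelmann framework. Since $(N,h)$ has constant scalar curvature $\mathbf{s}_h>0$, a function $f(x,y)=u(x)$ depending only on $M$ satisfies the Yamabe equation on $(M\times N,g+\delta h)$ if and only if $u$ solves, after a multiplicative rescaling absorbing the Lagrange multiplier $\mu$,
\begin{equation*}
-\delta\, a_k \Delta_g u + (\delta\,\mathbf{s}_g + \mathbf{s}_h)\,u = \mathbf{s}_h\, u^{p_k-1}
\end{equation*}
on $M$. Since $k=m+n$ forces $p_k<p_n$, this is a subcritical equation on the $n$-dimensional manifold $M$ with semiclassical parameter $\varepsilon^2=\delta$; its positive critical points are the sought-after solutions.

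First I would set up the variational framework on $H^1(M)$, working on the Nehari manifold $\mathcal{N}_\delta$ associated to the functional
$$J_\delta(u)=\tfrac12\int_M\bigl(\delta a_k|\nabla u|_g^2+(\delta\mathbf{s}_g+\mathbf{s}_h)u^2\bigr)\,dv_g-\tfrac{\mathbf{s}_h}{p_k}\int_M |u|^{p_k}\,dv_g,$$
which is bounded below on $\mathcal{N}_\delta$ and satisfies the Palais--Smale condition (the exponent is subcritical). Next I would analyze the limit problem: zooming in at any point $p\in M$ via normal coordinates and the rescaling $x\mapsto\varepsilon x$ formally yields the Euclidean equation $-a_k\Delta U+\mathbf{s}_h U=\mathbf{s}_h U^{p_k-1}$ on $\mathbb{R}^n$, whose unique positive radial ground state $U_0$ and associated mountain-pass energy $c_\infty$ will describe the blow-up profile and, after the standard $\delta^{n/2}$-rescaling, the limiting infimum of $J_\delta$ on $\mathcal{N}_\delta$.

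The heart of the argument is the pair of Benci--Cerami maps. Using cut-offs of $U_0$ centered at points of $M$, I would construct a continuous $\Phi_\delta:M\to \mathcal{N}_\delta\cap\{J_\delta\leq c_\infty+\alpha(\delta)\}$ with $\alpha(\delta)\to 0$. Then from a concentration analysis of low-energy Nehari sequences I would define a barycenter map $\beta_\delta$ in the opposite direction, built from the moments of the $L^{p_k}$-density of $u$ (taken in a Euclidean embedding of $M$ and pulled back through the nearest-point retraction from a tubular neighborhood), and check that $\beta_\delta\circ\Phi_\delta$ is homotopic to $\mathrm{id}_M$ for all sufficiently small $\delta$. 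By the standard Lusternik--Schnirelmann argument this forces the LS category of the sublevel set in itself to be at least $Cat(M)$, producing at least $Cat(M)$ critical points of $J_\delta$ with energy below $c_\infty+\alpha(\delta)$; a mountain-pass argument above this level then yields one additional critical point of strictly higher energy, for a total of $Cat(M)+1$ distinct positive solutions.

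The main obstacle will be the concentration step underlying $\beta_\delta$: showing that any sequence $u_{\delta_j}\in\mathcal{N}_{\delta_j}$ with $J_{\delta_j}(u_{\delta_j})\to c_\infty$ must concentrate, after rescaling by $\varepsilon=\sqrt{\delta}$, around a single point of $M$ with profile $U_0$. This requires uniform elliptic $L^\infty$ and decay estimates as $\delta\to 0$, a Lions-type concentration-compactness dichotomy adapted to the Riemannian setting to exclude multi-bump and delocalized behavior, and the non-degeneracy of $U_0$ to promote the asymptotic profile to the sharp quantitative control of $\beta_\delta\circ\Phi_\delta$ needed for the homotopy to the identity.
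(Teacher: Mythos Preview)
Your proposal follows essentially the same Benci--Cerami--Micheletti strategy as the paper: reduce to a subcritical equation on $M$, set up the Nehari manifold, build continuous maps $M\to\{\text{low sublevel set}\}\to M$ whose composite is homotopic to the identity, and invoke Lusternik--Schnirelmann theory plus one additional critical point. The implementation differs in two places worth noting.

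First, for the ``barycenter'' map the paper stays intrinsic: rather than embedding $M$ in Euclidean space and taking moments, it extends the Grove--Karcher Riemannian center of mass to functions that are merely \emph{concentrated} (not supported) in a small geodesic ball, by first multiplying $u$ by a cutoff of its own concentration function $x\mapsto \int_{B(x,r)}|u|/\|u\|_1$ and then taking the Riemannian center of mass of the result. This avoids the tubular-neighborhood retraction entirely.

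Second, and more significantly, the concentration step in the paper is far lighter than what you anticipate. No Lions dichotomy, no uniform $L^\infty$ or decay estimates, and no non-degeneracy of $U_0$ are used. Instead the paper shows directly (Proposition~3.1 and Corollary~3.3) that if $u\in N_\varepsilon$ splits as $u_1+u_2$ with disjoint supports and each piece carries a fixed fraction of the $L^{\mathbf p}$-mass, then $J_\varepsilon(u)\geq \Psi\cdot\mathbf m_\varepsilon$ for some $\Psi>1$ depending only on that fraction. Combined with a covering lemma (Proposition~3.6) guaranteeing that some $\varepsilon$-ball carries a definite amount of mass, this already forces every low-energy $u$ to have $(u^+)^{\mathbf p}$ concentrated in a single ball of fixed small radius---which is exactly what the center-of-mass map needs, and already enough to put $\mathbf c\circ\mathbf i(x)$ within the convexity radius of $x$. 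So your proposed machinery would work, but the paper gets there by a much more elementary splitting argument; in particular you do not need the sharp profile or non-degeneracy of the ground state for the homotopy. Finally, the extra solution in the paper comes simply from the contractibility of $N_\varepsilon$ (hence a critical point above the sublevel set), which is morally the same as your mountain-pass step.
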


Recall that the Lusternik-Schnirelmann category of a manifold $M$, $Cat(M)$, is the minimum number
of contractible open subsets needed to cover $M$ (of course it is a topological invariant). $Cat(S^2 )=2$ and 
for any  other compact Riemann surface $S$, $Cat(S) =3$. In particular if $(H,g_h )$ is  a closed hyperbolic surface 
we obtain 4 different solutions of the Yamabe equation on $(H \times {\bf S}^2  , g_h + \delta g^2_0 )$
for $\delta >0$ small (one of them is the constant solution).
These solutions are constant on any slice $\{ x \} \times S^2$. The 3 nonconstant solutions built in the 
theorem  concentrate around 
a point and are certainly cantidates to be minimizers for the Yamabe constant. But it seems interesting
to understand if for $\delta$ small enough one could get solutions concentrating around any point in $H$. 

\vspace{1cm}

The solutions in Theorem 1.1 are functions $u : M \rightarrow \re_{>0}$ considered as functions on $M \times N$. Such a
functions solves the $(g+ \delta h)$-Yamabe equation if and only if it solves 

\begin{equation}
-a_k \Delta_g u + ({\bf s}_g + (1/\delta ) {\bf s} _h ) u = u^{p_k -1}  .
\end{equation}

Therefore Theorem 1.1 is a corollary of:

\begin{Theorem} For any positive constant ${\bf s}_h$ there exists $\delta_0 >0$ such that for any $\delta \in (0, \delta_0 )$ equation
(2) has at least $Cat(M) + 1$ different solutions.

\end{Theorem}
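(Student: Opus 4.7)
The plan is to apply the photography/barycenter technique of Benci--Cerami--Passaseo for singularly perturbed subcritical semilinear equations on compact manifolds. First I would rescale (2): setting $u = A_\delta v$ with $A_\delta = ({\bf s}_h/\delta)^{1/(p_k-2)}$ and $\varepsilon^2 = a_k \delta/{\bf s}_h$, equation (2) becomes
\begin{equation*}
-\varepsilon^2 \Delta_g v + V_\delta(x)\,v = v^{p_k-1}, \qquad V_\delta(x) = 1 + (\varepsilon^2/a_k)\,{\bf s}_g(x),
\end{equation*}
with $V_\delta \to 1$ in $C^0(M)$ and $\varepsilon \to 0$ as $\delta \to 0$. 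Since $k = m+n > n$, $p_k$ is strictly subcritical on the $n$-manifold $M$, so $H^1(M) \hookrightarrow L^{p_k}(M)$ is compact and the associated functional
\begin{equation*}
J_\varepsilon(v) = \tfrac12 \int_M \left(\varepsilon^2|\nabla v|^2 + V_\delta\, v^2\right) dv_g - \tfrac{1}{p_k}\int_M v_+^{p_k}\, dv_g
\end{equation*}
satisfies the Palais--Smale condition on the positive Nehari manifold $\mathcal{N}_\varepsilon$, whose positive critical points are precisely the positive solutions I seek.

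Let $U$ be the unique positive radial ground state on $\mathbb{R}^n$ of $-\Delta w + w = w^{p_k-1}$, with energy $c_\infty$. The crucial next step is the concentration analysis: I would show that $m_\varepsilon := \inf_{\mathcal{N}_\varepsilon} J_\varepsilon = \varepsilon^n c_\infty\,(1+o(1))$ and that, for some small $\eta > 0$, every $v \in \mathcal{N}_\varepsilon$ with $J_\varepsilon(v) \leq m_\varepsilon + \eta \varepsilon^n$ concentrates, after rescaling by $\varepsilon$ in geodesic normal coordinates, into an $L^{p_k}$-neighborhood of a translate of $U$ centered at a single point of $M$. With this in hand I would define the \emph{photography map} $\Phi_\varepsilon : M \to \mathcal{N}_\varepsilon$ by placing a cut-off, $\varepsilon$-rescaled, Nehari-projected copy of $U$ at each $\xi \in M$ (so that $\sup_{\xi \in M} J_\varepsilon(\Phi_\varepsilon(\xi)) = m_\varepsilon + o(\varepsilon^n)$), and the \emph{barycenter map} $\beta_\varepsilon : \{J_\varepsilon \leq m_\varepsilon + \eta \varepsilon^n\} \cap \mathcal{N}_\varepsilon \to M$ via an isometric embedding $M \hookrightarrow \mathbb{R}^N$, the center of mass of the density $v_+^{p_k}$, and the nearest-point retraction from a tubular neighborhood of $M$; the concentration statement then forces $\beta_\varepsilon \circ \Phi_\varepsilon \simeq \mathrm{id}_M$ for $\delta$ small.

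Standard Lusternik--Schnirelmann theory on the complete Banach manifold $\mathcal{N}_\varepsilon$ then yields at least $\mathrm{Cat}(M)$ distinct positive critical points in the sublevel $\{J_\varepsilon \leq m_\varepsilon + \eta \varepsilon^n\}$, since via $\beta_\varepsilon \circ \Phi_\varepsilon \simeq \mathrm{id}_M$ this sublevel has LS-category at least $\mathrm{Cat}(M)$. To obtain the extra $(+1)$-th solution I would argue topologically: the positive Nehari manifold $\mathcal{N}_\varepsilon$ is contractible (the positive cone in $H^1(M)$ is convex, and radial projection onto $\mathcal{N}_\varepsilon$ is a homeomorphism), whereas the low-energy sublevel has LS-category $\geq \mathrm{Cat}(M) \geq 2$, any closed manifold of positive dimension being non-contractible by Poincar\'e duality. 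If $J_\varepsilon|_{\mathcal{N}_\varepsilon}$ had no critical point above $m_\varepsilon + \eta \varepsilon^n$, the Palais--Smale condition and the standard deformation lemma would exhibit this sublevel as a deformation retract of $\mathcal{N}_\varepsilon$, forcing it to be contractible --- a contradiction. Hence at least one further critical value, and so a further positive solution, lies above.

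The hardest step will be the concentration analysis underpinning the barycenter map: controlling low-energy Palais--Smale sequences on the curved $(M,g)$ with the non-constant potential $V_\delta$, and showing that they genuinely concentrate at a single point with Euclidean profile $U$. This demands uniform-in-$\delta$ a priori estimates, a blow-up/rescaling argument invoking the uniqueness and nondegeneracy of $U$ to rule out multi-bump or vanishing alternatives, and enough quantitative uniformity to make the homotopy $\beta_\varepsilon \circ \Phi_\varepsilon \simeq \mathrm{id}_M$ rigorous.
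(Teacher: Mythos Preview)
Your proposal is correct and follows the same overall architecture as the paper: rescale to a singularly perturbed subcritical equation on $M$, work on the Nehari manifold, use the photography map and a center-of-mass map to show the low-energy sublevel has Lusternik--Schnirelmann category at least $Cat(M)$, and get the extra solution from contractibility of the Nehari manifold. Two technical choices differ. First, for the map back to $M$ you use the Euclidean barycenter of $v_+^{p_k}$ via an isometric embedding $M\hookrightarrow\re^N$ plus nearest-point retraction, whereas the paper develops an intrinsic \emph{Riemannian} center of mass (after Grove--Karcher), extended from functions supported in a small ball to functions merely concentrated there; both work, and the intrinsic version avoids the embedding but costs a short section of preparation. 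Second, your concentration step is heavier than what the paper actually needs: you propose a blow-up/profile argument invoking uniqueness and nondegeneracy of $U$ to place low-energy $v$ in an $L^{p_k}$-neighborhood of a single translated bump, while the paper proves only the weaker (and sufficient) statement that a fixed fraction $\eta$ of $\int_M (u^+)^{\bf p}$ sits in one small geodesic ball, via an elementary splitting lemma (if $u=u_1+u_2$ with disjoint supports and both pieces carry nontrivial mass, then $J_\varepsilon(u)$ is bounded away from ${\bf m}_\varepsilon$) combined with a covering argument guaranteeing one $\varepsilon$-ball already carries mass $\geq\gamma$. The paper's route is more economical and never uses nondegeneracy of $U$; your route would yield a sharper description of low-energy functions but is not needed for the category bound.
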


Theorem 1.2 will be proved using Lusternik-Schnirelmann theory. Solutions are obtained as critical points of
a functional $J$ defined on an appropriate Nehari manifold, following ideas introduced by Vieri Benci, Claudio Bonanno and
Anna Maria Micheletti in \cite{Micheletti} which consider the equation with constant coefficients on a 
Riemannian manifold and by 
 Vieri Benci and  G Cerami in
\cite{Benci}, for the same equation in $\re^n$. 
In Section 2  we discuss the preliminary results that give Theorem 1.2 as a corollary of bounding from
below the Lusternik-Schnirelmann category of the space functions in the Nehari manifold where the functional is close to the minimum
by $Cat(M)$.  In section
3  we prove that functions close to the minimum of the functional concentrate around a point.
In section 5 we define the center of mass for functions in a Riemannian manifolds which concentrate on a small
ball, extending the definition of Riemannian center of mass introduced by Karsten Grove and Hermann Karcher in \cite{Grove} for
functions supported in a small ball. This willl be applied in Section 6 to functions in the Nehari manifold to obtain the necessary bound on the
Lusternik-Schnirelmann category. 


\section{Preliminaries}

\subsection{The limiting equation and solution  on $\re^n$}

Let $2<q<p_n$ (if $n=2$ any $q \in (2, \infty )$). It is well known that  there exists a unique (up to translation) positive finite-energy solution $U$  of the equation on $\re^n$

$$-\Delta U + U = U^{q-1}. $$

Moreover the function $U$ is radial (around some chosen point) and it is exponentially decreasing at infinity. Consider  the functional $E: H^1 (\re^n ) \rightarrow \re$,

$$E(f)= \int_{\re^n} (1/2) |\nabla f |^2 + (1/2) f^2 -(1/q) (f^+)^q  \ dx ,$$ 

\noindent 
where $f^+ (x):=\max \{ f(x), 0\}$, and the corresponding Nehari manifold

$$N(E) :=\{ u\in H^1 (M)  - \{ 0 \} : \int_{\re^n} (  | \nabla u |^2 + u^2 ) \ dx = \int_{\re^n}  (u^+ )^q \ dx \} .$$ 

$U$ is actually the minimizer of the functional $E$ restricted to $N(E)$. The minimum is then

\begin{equation}
{\bf m}(E) = \min \{ E(u) \ : \ u \in N(E) \} =  \frac{q-2}{2q} {\| U \|}_q^q .
\end{equation}

For any $\varepsilon >0$ let 

$$E_{\varepsilon} (f)=\varepsilon^{-n}  \int_{\re^n} (\varepsilon^2 /2) |\nabla f |^2 + (1/2) f^2 -(1/q) (f^+)^q  \ dx $$ 

\noindent
and

$$N(E_{\varepsilon} ) :=\{ u\in H^1 (M)  - \{ 0 \} : \int_{\re^n} (  \varepsilon^2  | \nabla u |^2 + u^2 )  \  dx = \int_{\re^n}  (u^+ )^q  \  dx  \} .$$ 

Let $U_{\varepsilon} (x) = U((1/\varepsilon ) x)$.
Then $U_{\varepsilon} \in N(E_{\varepsilon} )$, and it is a solution of

$$-\varepsilon^2 \Delta U_{\varepsilon} + U_{\varepsilon} = U_{\varepsilon}^{q-1}.$$

\noindent
$U_{\varepsilon}$ is a minimizer of $E_{\varepsilon}$ restricted to $N(E_{\varepsilon} )$. Note that the minimum is 
equal to ${\bf m}(E)$.

\subsection{Setting on a Riemannian manifold and the structure of the proof of Theorem 1.1}

Consider  a closed Riemannian manifold $(M^n ,g)$ (of dimension $n$) with scalar curvature ${\bf s}_g$. Let $(N^m , h)$ be a closed Riemannian
manifold  of constant positive scalar curvature ${\bf s}_h$. Then a function $u : M \rightarrow \re$ satisfies the
Yamabe equation corresponding to $(M\times N , g+ \varepsilon^2 h)$ if it solves:

\begin{equation}
-a_{m+n} \Delta_g u + ({\bf s}_g + \varepsilon^{-2} {\bf s}_h ) u =  u^{p_{m+n}-1},  
\end{equation}

\noindent
where we will consider $\varepsilon >0$ small enough so that ${\bf s}_g + \varepsilon^{-2} {\bf s}_h $ is positive.  This is of course equivalent to finding solutions of the equation

\begin{equation}
-a_{m+n} \Delta_g u + ({\bf s}_g + \varepsilon^{-2} {\bf s}_h ) u =  \varepsilon^{-2} {\bf s}_h  u^{p_{m+n}-1}  
\end{equation}

Moreover, we can normalize $h$ and assume that ${\bf s}_h = a_{m+n}$. Then the previous equation is equivalent to:

\begin{equation}
-\varepsilon^2  \Delta_g u + ( ({\bf s}_g /a_{m+n} )  \varepsilon^2 + 1 ) u =  u^{p_{m+n}-1}  
\end{equation} 

Our goal is to find solutions of this equation for $\epsilon$ small enough.

Consider the functional $J_{\epsilon} : H^1 (M) \rightarrow \re$ given by

$$J_{\varepsilon} (u) = \varepsilon^{-n} \int_M \left( \frac{1}{2} \varepsilon^2 | \nabla u |^2  +  \frac{{\bf s}_g  \varepsilon^2 + a_{m+n} }{2a_{m+n}} u^2 -\frac{1}{p_{m+n}}  (u^+ )^{p_{m+n}} \right) dv_g  ,$$

\noindent
and the {\it Nehari manifold} $N_{\varepsilon}$ associated to the functional $J_{\varepsilon}$:

$$N_{\varepsilon} = \{ u\in H^1 (M) - \{ 0 \} : \int_M \varepsilon^2 | \nabla u|^2 + (({\bf s}_g /a_{m+n} )  \varepsilon^2 +  1)  u^2 dv_g = \int_M (u^+ )^{p_{m+n}} dv_g  \}  .$$

It is well known that critical points of $J_{\varepsilon}$ restricted to the Nehari manifold are positive solutions of Equation (6): it is clear that positive solutions of (6) belong to 
$N_{\varepsilon}$ and are critical points for $J_{\varepsilon}$. In the other direction if $u \in N_{\varepsilon}$ is a critical point of $J_{\varepsilon}$ then it follows that
$u$ must be nonnegative and a weak solution of (6); regularity theory then implies that $u$ is positive and smooth.

Let 

$${\bf m}_{\varepsilon} = \inf_{u\in N_{\varepsilon} } J_{\varepsilon} (u) = \varepsilon^{-n} (1/2 - 1/p_{m+n}) \inf_{u\in N_{\varepsilon} } \int (u^+  )^{p_{m+n}} dv_g .$$

Let us point out now the following:

\begin{Lemma} If $u\in N_{\varepsilon}$ then 

$$\int_M  \varepsilon^2 {\| \nabla u \|}^2 + (({\bf s}_g /a_{m+n} )  \epsilon^2 +  1)  u^2 dv_g  \geq \frac{2p}{p-2} \varepsilon^n {\bf m}_{\varepsilon} .$$

\end{Lemma}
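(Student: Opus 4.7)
The plan is to show that the two sides of the inequality are directly related through the definitions of $J_\varepsilon$ and the Nehari manifold $N_\varepsilon$, so the statement follows by rearranging and applying the definition of ${\bf m}_\varepsilon$ as an infimum.

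First I would use the defining identity of $N_\varepsilon$: for $u \in N_\varepsilon$,
\[
\int_M \varepsilon^2 |\nabla u|^2 + \Bigl(\tfrac{{\bf s}_g}{a_{m+n}}\varepsilon^2 + 1\Bigr) u^2 \, dv_g = \int_M (u^+)^{p} \, dv_g,
\]
where I write $p = p_{m+n}$. Substituting this identity into the expression for $J_\varepsilon(u)$ collapses the two quadratic terms and the critical term into a single multiple of $\int_M (u^+)^p \, dv_g$, yielding
\[
J_\varepsilon(u) = \varepsilon^{-n}\Bigl(\tfrac{1}{2} - \tfrac{1}{p}\Bigr) \int_M (u^+)^p \, dv_g = \varepsilon^{-n}\, \tfrac{p-2}{2p} \int_M (u^+)^p \, dv_g.
\]

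Next, since $u \in N_\varepsilon$, the definition of the infimum gives $J_\varepsilon(u) \geq {\bf m}_\varepsilon$. Rearranging the previous display then yields
\[
\int_M (u^+)^p \, dv_g \geq \tfrac{2p}{p-2}\, \varepsilon^{n}\, {\bf m}_\varepsilon.
\]
Finally, applying the Nehari identity in reverse converts the left-hand side back into the quadratic expression, giving exactly the desired bound. There is no real obstacle here; the lemma is a formal consequence of the relation between $J_\varepsilon$ on $N_\varepsilon$ and the $L^p$-norm of $u^+$, together with the fact that ${\bf m}_\varepsilon$ is the infimum of $J_\varepsilon$ over $N_\varepsilon$. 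The only thing to double-check is the algebraic identity $1/2 - 1/p = (p-2)/(2p)$ and that no cross terms are lost in the substitution, both of which are routine.
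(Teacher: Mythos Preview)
Your argument is correct and is exactly the computation the paper has in mind: the paper states the lemma without proof, immediately after recording the identity ${\bf m}_{\varepsilon} = \varepsilon^{-n}(1/2 - 1/p_{m+n})\inf_{u\in N_\varepsilon}\int (u^+)^{p_{m+n}}\,dv_g$, which is precisely the relation you unwind. There is nothing to add.
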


The proof of Theorem 1.1 is based on the classical Lusternik-Schnirelmann theorems (see for instance \cite[Chapter 9]{Ambrosetti} or \cite{Cerami}) applied to
the functional $J_{\varepsilon} : N_{\varepsilon} \rightarrow \re$: 

\vspace{.4cm}

{\it Theorem: Let $J$ be a $C^1$  functional on a complete $C^{1,1}$ Banach manifold N. Assume that $J$ is
bounded below and satisfies the Palais-Smale condition. Let $J^d =\{ u \in N : J(u) <d \}$. Then $J$ has at least
$Cat(J^d )$ critical points in $J^d$.}

\vspace{.4cm}

$J_{\varepsilon}$ is clearly bounded below in $N_{\varepsilon}$ and it satisfies the Palais-Smale condition (since $p_{n+m}$ is subcritical).
We will call $\Sigma_{\varepsilon, d} = \{ u \in N_{\varepsilon} : J_{\varepsilon}  (u) < d \}$. 
We will prove

\begin{Theorem} There exists $\varepsilon_0 >0$ such that for any $\varepsilon \in (0, \varepsilon_0 )$ there exists $\delta >0$ such
that $Cat (\Sigma_{\varepsilon , {\bf m}_{\varepsilon} + \delta } ) \geq Cat (M) $. 
\end{Theorem}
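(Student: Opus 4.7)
The plan is to prove this category estimate by the standard Benci--Cerami--Micheletti photography/barycenter method: I will build two continuous maps
\[
M \;\xrightarrow{\;\Phi_\varepsilon\;}\; \Sigma_{\varepsilon,{\bf m}_\varepsilon+\delta}\;\xrightarrow{\;\beta_\varepsilon\;}\; M,
\]
and show that $\beta_\varepsilon\circ\Phi_\varepsilon$ is homotopic to $\mathrm{id}_M$. The standard category inequality (if $g\circ f\simeq \mathrm{id}_A$ then $\mathrm{Cat}(A)\le\mathrm{Cat}(B)$) then gives $\mathrm{Cat}(\Sigma_{\varepsilon,{\bf m}_\varepsilon+\delta})\ge \mathrm{Cat}(M)$.

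To define $\Phi_\varepsilon$, I would use the $\mathbb{R}^n$ ground state $U$ of the subcritical equation with $q=p_{m+n}$ from \S2.1. For each $q\in M$, take a smooth cutoff $\chi$ supported in a ball of radius less than the injectivity radius of $(M,g)$, and set
\[
\widehat{\Phi}_\varepsilon(q)(x) \;=\; \chi\bigl(|\exp_q^{-1}(x)|\bigr)\,U_\varepsilon\bigl(\exp_q^{-1}(x)\bigr),
\]
then rescale by a unique positive scalar $t_\varepsilon(q)$ so that $\Phi_\varepsilon(q):=t_\varepsilon(q)\widehat{\Phi}_\varepsilon(q)\in N_\varepsilon$. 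Using the exponential decay of $U$ and a change of variables in normal coordinates, both the numerator and denominator in the Nehari constraint converge, after the $\varepsilon^{-n}$ normalization built into $J_\varepsilon$, to those of $U$ on $\mathbb{R}^n$ uniformly in $q$. Hence $t_\varepsilon(q)\to 1$ and $J_\varepsilon(\Phi_\varepsilon(q))\to {\bf m}(E)$ uniformly in $q\in M$ as $\varepsilon\to 0$. Since ${\bf m}_\varepsilon\to{\bf m}(E)$ (as an upper and lower bound comparison with $\Phi_\varepsilon$ and with suitable localizations shows), for every $\delta>0$ there is $\varepsilon_0$ so that for $\varepsilon<\varepsilon_0$ we get $J_\varepsilon(\Phi_\varepsilon(q))<{\bf m}_\varepsilon+\delta$ for all $q\in M$, i.e.\ $\Phi_\varepsilon$ lands in $\Sigma_{\varepsilon,{\bf m}_\varepsilon+\delta}$.

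The barycenter $\beta_\varepsilon$ is the delicate part. Its construction relies on the concentration result of Section 3: if $\delta>0$ is sufficiently small and $u\in\Sigma_{\varepsilon,{\bf m}_\varepsilon+\delta}$, then most of the mass of $(u^+)^{p_{m+n}}\,dv_g$ is concentrated in a small geodesic ball. Using this, one defines $\beta_\varepsilon(u)$ as the generalized Riemannian center of mass of the measure $(u^+)^{p_{m+n}}\,dv_g$ via the extension introduced in Section 5. One must check that for $\varepsilon,\delta$ small enough the concentration is sharp enough that the center of mass is well defined, continuous on $\Sigma_{\varepsilon,{\bf m}_\varepsilon+\delta}$, and takes values in $M$ itself (not merely in an ambient tube).

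Finally, I would show $\beta_\varepsilon\circ\Phi_\varepsilon\simeq \mathrm{id}_M$. Symmetry of $U$ and the uniform concentration of $\Phi_\varepsilon(q)$ at $q$ (its mass lies in a geodesic ball around $q$ of radius $O(\varepsilon\log(1/\varepsilon))$, say) imply that $\beta_\varepsilon(\Phi_\varepsilon(q))\to q$ uniformly. Once this distance is less than the convexity radius of $(M,g)$, the straight-line geodesic homotopy joins $\beta_\varepsilon\circ\Phi_\varepsilon$ to $\mathrm{id}_M$. The main obstacle I foresee is not the abstract scheme but the two quantitative ingredients controlling the smallness of $\delta$: (i)~the uniform convergence $J_\varepsilon(\Phi_\varepsilon(q))\to{\bf m}(E)$ together with ${\bf m}_\varepsilon\to{\bf m}(E)$, which requires careful handling of the scalar curvature term ${\bf s}_g\varepsilon^2$ and the Jacobian of $\exp_q$ in normal coordinates, and (ii)~showing that sublevel-set concentration is strong enough to make the generalized center of mass of Section 5 both well defined and continuous, so that the two steps are compatible for a common choice of $\delta$.
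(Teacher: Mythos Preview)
Your proposal is correct and follows essentially the same route as the paper: the map $\Phi_\varepsilon$ is exactly the paper's inclusion ${\bf i}_{r,\varepsilon}$ built from cut-off rescaled ground states (Section~4), the barycenter $\beta_\varepsilon$ is the paper's map ${\bf c}_\varepsilon = {\bf Cm}(r,\eta)((u^+)^{\bf p})$ using the concentration result (Theorem~3.7) together with the generalized Riemannian center of mass (Section~5), and the homotopy $\beta_\varepsilon\circ\Phi_\varepsilon\simeq \mathrm{id}_M$ is obtained via the unique minimizing geodesic once one knows ${\bf c}_\varepsilon\circ{\bf i}_\varepsilon(x)\in B(x,2r)$. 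The two quantitative ingredients you flag are precisely what the paper establishes in Theorem~4.1 (together with Theorem~2.5 for ${\bf m}_\varepsilon\to{\bf m}(E)$) and Theorem~3.7/Theorem~5.2.
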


Then we would have $Cat(M)$ critical points of $J_{\varepsilon}$ in $ \Sigma_{\varepsilon , {\bf m}_{\varepsilon} + \delta }$ and since $N_{\varepsilon}$ is
contractible we must have at least one other critical point (of higher energy) , giving the proof of Theorem 1.1. Then we have to prove Theorem 2.2. 

To prove Theorem 2.2 we will construct continuous maps ${\bf i}:M \rightarrow \Sigma_{\varepsilon , {\bf m}_{\varepsilon} + \delta }$ and
${\bf c}: \Sigma_{\varepsilon , {\bf m}_{\varepsilon} + \delta } \rightarrow M$ such that ${\bf c} \circ {\bf i}$ is homotopic
to the identity in $M$. If $A \subset \Sigma_{\varepsilon , {\bf m}_{\varepsilon} + \delta }$ is a contractible open subset  then ${\bf i}^{-1} (A) \subset M$ is
open and contractible (since it is homotopic to ${\bf c} (A)$). It follows that the existence of the maps ${\bf i}$ and ${\bf c}$ would imply 
that  $Cat (\Sigma_{\varepsilon , {\bf m}_{\varepsilon} + \delta } ) \geq Cat (M) $, proving  Theorem 2.2.

\subsection{Restricted Yamabe constants and ${\bf m}_{\varepsilon}$.} As in the previous subsection consider a closed Riemannian manifold $(N^m ,h)$ of constant positive scalar curvature and any closed
Riemannian manifold $(M^n ,g)$.  Let $\varepsilon >0$. One can restrict the $(g + \varepsilon^2 h)$-Yamabe functional  to functions which depend only on $M$ 
and  recall the definition in \cite{Akutagawa}

$$Y_M (N\times M, [\varepsilon^2 h +   g]) = \inf_{u\in H^1 (M) - \{ 0 \} } Y_{\varepsilon^2 h+  g} (u) $$

Let $g_E$ be the Euclidean metric on $\re^n$ and
recall also the following result from \cite[Theorem 1.1]{Akutagawa}

\begin{Theorem} $\lim_{\varepsilon \rightarrow 0} Y_M  (N\times M, [ \varepsilon^2 h + g ] = Y_{\re^n} (N \times \re^n , h+ g_E )$
\end{Theorem}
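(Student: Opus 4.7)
I would prove the equality by establishing the two inequalities separately.

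For the upper bound $\limsup_{\varepsilon \to 0} Y_M(N\times M, [\varepsilon^2 h + g]) \leq Y_{\re^n}(N\times\re^n, h+g_E)$, I would use a test-function construction. Fix any $v \in C_c^\infty(\re^n)$ and a point $x_0 \in M$. In normal coordinates at $x_0$ set
$$u_\varepsilon(x) = v\bigl(\varepsilon^{-1}\exp_{x_0}^{-1}(x)\bigr),$$
extended by zero; for $\varepsilon$ small the support is a ball of radius $O(\varepsilon)$ inside the injectivity radius. The change of variables $y = \varepsilon^{-1}\exp_{x_0}^{-1}(x)$ has Jacobian $\varepsilon^n(1+O(\varepsilon^2|y|^2))$. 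Combining this with $|\nabla u|_{g+\varepsilon^2 h} = |\nabla u|_g$, $dv_{g+\varepsilon^2 h} = \varepsilon^m\, dv_h\, dv_g$, and ${\bf s}_{g+\varepsilon^2 h} = {\bf s}_g + \varepsilon^{-2}{\bf s}_h$, a direct computation shows that the various $\varepsilon$ powers cancel in $Y_{g+\varepsilon^2 h}(u_\varepsilon)$ and that the quotient tends to $Y_{h+g_E}(v)$. Taking the infimum over $v$ gives the claim.

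For the lower bound, I take near-minimizers $u_\varepsilon$ of $Y_M(N\times M, [\varepsilon^2 h + g])$, normalized so that $\int_M u_\varepsilon^{p_{m+n}}\, dv_g = 1$. A scaling analysis pins down the only viable concentration scale: if the mass of $u_\varepsilon$ is concentrated in a ball of radius $R_\varepsilon$ around some point $x_\varepsilon \in M$, then the gradient contribution in the numerator is of order $(\varepsilon/R_\varepsilon)^{2m/(m+n)}$ while the $\varepsilon^{-2}{\bf s}_h \int u_\varepsilon^2$ contribution is of order $(R_\varepsilon/\varepsilon)^{2n/(m+n)}$, and finiteness of the upper bound forces $R_\varepsilon \sim \varepsilon$. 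Using that $p_{m+n} < p_n$, Rellich compactness of $H^1(M)\hookrightarrow L^{p_{m+n}}(M)$ together with a concentration-compactness dichotomy rules out vanishing and splitting into multiple concentrating regions. After passing to a subsequence with $x_\varepsilon \to x_\infty$, the rescaled functions $v_\varepsilon(y) = u_\varepsilon(\exp_{x_\varepsilon}(\varepsilon y))$ are uniformly bounded in $H^1_{\mathrm{loc}}(\re^n)$, and the pulled-back metrics $\varepsilon^{-2}\exp_{x_\varepsilon}^* g$ converge smoothly on compact sets to $g_E$. Extracting a nontrivial weak limit $v \in H^1(\re^n)$ and invoking lower semicontinuity of the Dirichlet energy and Fatou's lemma for the remaining terms yields $Y_{h+g_E}(v) \leq \liminf_{\varepsilon\to 0} Y_{g+\varepsilon^2 h}(u_\varepsilon)$, completing the argument.

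The principal obstacle will be the lower bound, specifically identifying the correct blow-up scale $R_\varepsilon \sim \varepsilon$ and ruling out fragmentation of the mass into several concentrating regions. The first difficulty is handled by the scaling comparison above, since the $\varepsilon$-dependent potential pins down the scale. The second requires a careful concentration-compactness argument adapted to the fact that the Nehari-type normalization couples the $L^2$ and $L^{p_{m+n}}$ norms with $\varepsilon$-dependent weights. Once these are in hand, weak compactness and the local Euclidean convergence of the geometry reduce matters to the $\re^n$ problem.
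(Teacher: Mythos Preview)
The paper does not give its own proof of this statement: Theorem 2.3 is quoted verbatim from \cite[Theorem 1.1]{Akutagawa} and used as a black box to derive Theorem 2.5. There is therefore no in-paper argument to compare your proposal against.

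That said, your outline is the expected one and is, in broad strokes, the strategy carried out in the cited reference. The upper bound via transplanted, $\varepsilon$-rescaled test functions in normal coordinates is exactly how one shows $\limsup \leq Y_{\re^n}$, and your computation of the cancellation of $\varepsilon$-powers is correct. For the lower bound your scaling heuristic is also right: with the volume prefactor $\mathrm{Vol}(N,\varepsilon^2 h)^{2/(m+n)} = \varepsilon^{2m/(m+n)} V^{2/(m+n)}$, a profile concentrated at scale $R$ contributes $(\varepsilon/R)^{2m/(m+n)}$ from the gradient and $(R/\varepsilon)^{2n/(m+n)}$ from the $\varepsilon^{-2}{\bf s}_h$ potential, which forces $R\sim\varepsilon$. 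The remaining work---ruling out vanishing and dichotomy, and passing to the limit in the rescaled problem---is where the actual analysis lies; your sketch correctly identifies this as the heart of the matter but does not carry it out. If you intend to supply a self-contained proof rather than cite \cite{Akutagawa}, you will need to make the concentration-compactness step precise (in particular, the subcriticality $p_{m+n}<p_n$ is what makes the local compactness go through), and verify that the weak limit $v$ is nonzero, which typically requires a quantitative lower bound on the mass in one $\varepsilon$-ball analogous to Proposition 3.6 of the present paper.
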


The constants  $ Y_{\re^n} (N \times \re^n , h+ g_E )$ can be computed \cite[Theorem 1.4]{Akutagawa}. To understand this,  recall the following particular case of the Gagliardo-Nirenberg
inequalities: for any $f\in H^1 (\re^n )$

$$ \| f \|_{p_{m+n}}^2 \leq  \sigma_{m,n} \| \nabla f \|_2^{\frac{2n}{m+n}} \| f \|_2^{\frac{2m}{m+n}} ,$$

\noindent
where $\sigma_{m,n} >0$ is the smallest constant for which the inequality holds for all $f$, the best constant. The function $U$ in Subsection 2.1 realizes
the inequality. This means that $U$ achieves the infimum of the functional

$$L(f) = \frac{ \|  \nabla f \|_2^{\frac{2n}{m+n}} \| f \|_2^{\frac{2m}{m+n}}}{\| f \|_{p_{m+n}}^2 } .$$

Note that for any $f \in H^1 (\re^n )$ and  $c, \lambda\in \re_{>0}$, we have that  $L(f) = L(cf_{\lambda})$, where $cf_{\lambda} (x) = c f(\lambda x)$. The Euler-Lagrange
equation of $L$ is:

\begin{equation}
- n \Delta u + m \frac{ \| \nabla u \|_2^2}{\| u \|_2^2 } u - (m+n) \frac{ \| \nabla u \|_2^2 }{\| u \|_{\bf p}^{\bf p}} u^{{\bf p}-1} =0 ,
\end{equation}

\noindent
where from now on we use the notation ${\bf p}=p_{m+n}$.

The function $U$ then verifies $n \|  U \|_2^2 = m \| \nabla U \|_2^2$ and   $n \|  U \|_{\bf p}^{\bf p} =( m +n)  \| \nabla U \|_2^2$

$$\frac{1}{\sigma_{m,n}} = L(U)=\frac{ n^{\frac{n}{m+n}}\  m^{\frac{m}{m+n}} \  \| U \|_{\bf p}^{\bf p}}{ (m+n) \| U \|_{\bf p}^2}$$

Then we get

$$\|  U \|_{\bf  p}^{{\bf p}-2}  = \frac{m+n}{\sigma_{m,n} n^{\frac{n}{m+n}} m^{\frac{m}{m+n}} } .$$

And then 

$${\bf m}(E) = \frac{{\bf p}-2}{2{\bf p}} \sqrt{\frac{(m+n)^{m+n}}{ (\sigma_{m,n})^{m+n} n^n m^m}} =   
\frac{1}{m+n} \sqrt{\frac{(m+n)^{m+n}}{ (\sigma_{m,n})^{m+n} n^n m^m}}.$$

From now on we will also use the notation ${\bf a}=a_{m+n}$.

If we let $V=Vol(N,h)$ then $V^{\frac{-2}{m}} h$ has volume one. And ${\bf s}_{V^{\frac{-2}{n}} h} = V^{\frac{2}{m}} {\bf  a}$. Then \cite[Theorem 1.4]{Akutagawa} says:

\begin{Theorem}

$$Y_{\re^n} (N \times \re^n , h+ g_E ) = \frac{ {\bf a} \  (m+n) \  V^{2/(m+n)}}{\sigma_{m,n} \  n^{n/(m+n)} \  m^{m/(m+n)}}  = {\bf a} V^{2/(m+n)} ((m+n) {\bf }({\bf m} (E)) )^{2/(m+n)} .$$

\end{Theorem}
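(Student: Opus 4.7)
The plan is to write out the functional $Y_{h+g_E}$ explicitly on functions that depend only on the $\re^n$-factor, then bound it below by combining the sharp Gagliardo--Nirenberg inequality with weighted AM--GM; the ground state $U$ from Subsection 2.1 will saturate both inequalities simultaneously, forcing equality.

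For $u\in H^1(\re^n)$ viewed as a function on $N\times\re^n$, the gradient couples only to $g_E$, the scalar curvature of $h+g_E$ equals ${\bf s}_h={\bf a}$ by the normalization, and integration over $N$ contributes a factor $V=\Vol(N,h)$. Using the identity $1-2/{\bf p}=2/(m+n)$, this yields
$$Y_{h+g_E}(u)=V^{2/(m+n)}\,\frac{{\bf a}\bigl(\|\nabla u\|_2^2+\|u\|_2^2\bigr)}{\|u\|_{\bf p}^2}.$$
It therefore suffices to minimize the scale-invariant ratio on the right. Setting $X=\|\nabla u\|_2^2$, $Y=\|u\|_2^2$, $\alpha=n/(m+n)$, $\beta=m/(m+n)$, Gagliardo--Nirenberg gives $\|u\|_{\bf p}^2\leq\sigma_{m,n}X^{\alpha}Y^{\beta}$, while weighted AM--GM applied to the numerator yields
$${\bf a}X+{\bf a}Y=\alpha\cdot\frac{{\bf a}X}{\alpha}+\beta\cdot\frac{{\bf a}Y}{\beta}\geq\left(\frac{{\bf a}X}{\alpha}\right)^{\alpha}\left(\frac{{\bf a}Y}{\beta}\right)^{\beta}=\frac{{\bf a}(m+n)}{n^{\alpha}m^{\beta}}\,X^{\alpha}Y^{\beta}.$$
Dividing these two estimates produces the desired lower bound
$$Y_{h+g_E}(u)\geq\frac{{\bf a}(m+n)\,V^{2/(m+n)}}{\sigma_{m,n}\,n^{n/(m+n)}\,m^{m/(m+n)}}.$$

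Next I would show the bound is attained by $U$. Equality in Gagliardo--Nirenberg holds at $U$ by definition of $\sigma_{m,n}$. Equality in weighted AM--GM requires ${\bf a}X/\alpha={\bf a}Y/\beta$, that is $mX=nY$, which is exactly the identity $n\|U\|_2^2=m\|\nabla U\|_2^2$ recorded in Subsection 2.1. Hence $u=U$ saturates both inequalities simultaneously, proving the first equality in the theorem. The second form is a purely algebraic rearrangement: substituting
$${\bf m}(E)=\frac{1}{m+n}\sqrt{\frac{(m+n)^{m+n}}{\sigma_{m,n}^{m+n}n^n m^m}}$$
into $((m+n){\bf m}(E))^{2/(m+n)}$ produces the factor $(m+n)/(\sigma_{m,n}n^{n/(m+n)}m^{m/(m+n)})$, and multiplying by ${\bf a}V^{2/(m+n)}$ recovers the first expression.

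There is no substantive analytic obstacle; the main delicate point is that the two sharp inequalities must be saturated by the same test function. This is made possible precisely by the normalization ${\bf s}_h={\bf a}$ chosen in Subsection 2.2---without it the coefficients of $X$ and $Y$ in the numerator of $Y_{h+g_E}(u)$ would differ and the AM--GM equality case would not align with the ground state's characterizing identity $nY=mX$.
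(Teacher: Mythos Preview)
Your argument is correct. The paper itself does not prove this statement: it quotes the first equality directly from \cite[Theorem 1.4]{Akutagawa}, and the second equality follows immediately from the formula for ${\bf m}(E)$ derived in the lines preceding the theorem. You have supplied a self-contained proof in place of the citation.

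Your approach---bounding the denominator above by the sharp Gagliardo--Nirenberg inequality and the numerator below by weighted AM--GM, then checking that the ground state $U$ saturates both simultaneously---is the natural way to recover the result, and it is essentially how the cited reference proceeds. One small remark on your final paragraph: the normalization ${\bf s}_h = {\bf a}$ is a convenience rather than a genuine necessity. Without it the numerator becomes ${\bf a}\|\nabla u\|_2^2 + {\bf s}_h\|u\|_2^2$, the AM--GM equality case shifts to $m{\bf a}\,\|\nabla u\|_2^2 = n{\bf s}_h\,\|u\|_2^2$, and since the Gagliardo--Nirenberg extremal family $\{U_\lambda\}_{\lambda>0}$ realizes every positive ratio $\|\nabla u\|_2^2/\|u\|_2^2$, some rescaling of $U$ still saturates both inequalities. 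So the delicate alignment you highlight is always available; the normalization merely arranges that $U$ itself, with the specific scaling fixed by $-\Delta U + U = U^{q-1}$, is the optimizer.
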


\vspace{1cm}

Note that 

$$Y_{\varepsilon^2 h +g} (u) = Vol(N,\varepsilon^2 h)^{1-\frac{2}{\bf p}} \ \ \frac{ \int_M  {\bf a} {\| \nabla u \|}^2  +  ( \varepsilon^{-2} {\bf s}_h + {\bf s}_g ) u^2 dv_g }{ {\|  u  \|}_{\bf p}^2} $$

Then it folllows from Theorem 2.3 that

$$Y_{\re^n} (N \times \re^n , h+ g_E ) = \lim_{\varepsilon \rightarrow 0} Vol(N,\varepsilon^2 h)^{\frac{2}{m+n}} \inf_{ u\in H^1 ( M )  } \frac{\bf a}{\varepsilon^{2}} 
\frac{ \int_M  \varepsilon^2  {\| \nabla u \|}^2  +  ( 1+ \varepsilon^2 {\bf s}_g /{\bf a} ) u^2 dv_g }{ {\|  u  \|}_{\bf p}^2} .$$

Since the infimum on the right is realized by a positive function and the quotient is invariant under homothecies we can take the infimum
over $N_{\varepsilon}$ and we get:

$$=\lim_{\varepsilon \rightarrow 0} {\bf a}  \ \varepsilon^{\frac{-2n}{m+n} } V^{\frac{2}{m+n}} \inf_{ u\in N_{\varepsilon}} { \left( \int_M u^{\bf p} \right)}^{\frac{{\bf p}-2}{\bf p}} .$$

Therefore

$$Y_{\re^n} (N \times \re^n , h+ g_E ) = \lim_{\varepsilon \rightarrow 0}{\bf  a} \ V^{\frac{2}{m+n}}  \inf_{ u\in N_{\varepsilon}} \left( \varepsilon^{-n}   \int_M u^{\bf p} \right)^{\frac{2}{m+n}} $$

$$={\bf a}  \   V^{\frac{2}{m+n}}  (m+n)^{\frac{2}{m+n}}  \lim_{\varepsilon \rightarrow 0} {{\bf m}_{\epsilon} }^{\frac{2}{m+n}}. $$

Then it follows from Theorem 2.4 that:

\begin{Theorem}$\lim_{\varepsilon \rightarrow 0} {\bf m}_{\epsilon} = {\bf m}(E)$.
\end{Theorem}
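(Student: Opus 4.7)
The plan is to string together Theorem 2.3, Theorem 2.4, and the scale-invariance calculation displayed immediately before Theorem 2.5, so that Theorem 2.5 drops out as a matching-of-constants identity.

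First, I would make precise the identification of $Y_M(N\times M,[\varepsilon^2 h+g])$ with a quantity expressed in terms of the Nehari manifold $N_\varepsilon$. For a function $u\in H^1(M)\setminus\{0\}$ regarded as depending only on the $M$-variable, Fubini gives
$$Y_{\varepsilon^2 h+g}(u)=\mathrm{Vol}(N,\varepsilon^2 h)^{\,1-2/{\bf p}}\cdot\frac{\int_M{\bf a}\|\nabla u\|^2+(\varepsilon^{-2}{\bf s}_h+{\bf s}_g)u^2\,dv_g}{\|u\|_{\bf p}^2}.$$
Using the normalization ${\bf s}_h={\bf a}$, pulling out a factor $\varepsilon^{-2}$, and noting that the ratio is homothety-invariant, one can minimize on $N_\varepsilon$ rather than on all of $H^1(M)\setminus\{0\}$. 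On $N_\varepsilon$ the defining constraint together with the formula for $J_\varepsilon$ yields $J_\varepsilon(u)=\varepsilon^{-n}(1/2-1/{\bf p})\int_M u^{\bf p}\,dv_g$, hence $\inf_{u\in N_\varepsilon}\varepsilon^{-n}\int_M u^{\bf p}\,dv_g=\tfrac{2{\bf p}}{{\bf p}-2}{\bf m}_\varepsilon=(m+n){\bf m}_\varepsilon$.

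Second, substituting this into the formula above and passing to the limit via Theorem 2.3, I obtain
$$Y_{\re^n}(N\times\re^n,h+g_E)=\lim_{\varepsilon\to 0}\,{\bf a}\,\varepsilon^{-2n/(m+n)}V^{2/(m+n)}\Big(\inf_{u\in N_\varepsilon}\int_M u^{\bf p}\,dv_g\Big)^{({\bf p}-2)/{\bf p}},$$
which, after gathering the powers of $\varepsilon$ inside the bracket, becomes exactly
$$Y_{\re^n}(N\times\re^n,h+g_E)={\bf a}\,V^{2/(m+n)}(m+n)^{2/(m+n)}\lim_{\varepsilon\to 0}{\bf m}_\varepsilon^{\,2/(m+n)}.$$

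Finally, I would compare this with Theorem 2.4, which asserts the same quantity equals ${\bf a}\,V^{2/(m+n)}\bigl((m+n)\,{\bf m}(E)\bigr)^{2/(m+n)}$. Cancelling the common positive factor ${\bf a}\,V^{2/(m+n)}(m+n)^{2/(m+n)}$ and raising both sides to the $(m+n)/2$ power yields $\lim_{\varepsilon\to 0}{\bf m}_\varepsilon={\bf m}(E)$. The only non-formal step is the justification of the scale-invariant passage from the infimum over $H^1(M)\setminus\{0\}$ to the infimum over $N_\varepsilon$ and the clean identification of the latter with a multiple of ${\bf m}_\varepsilon$; once this is in place the theorem is purely a comparison of constants, so there is no substantive analytic obstacle beyond what Theorems 2.3 and 2.4 already provide.
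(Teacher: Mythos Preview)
Your proposal is correct and follows essentially the same route as the paper: the argument preceding Theorem 2.5 in Section 2.3 is precisely this chain---rewrite $Y_{\varepsilon^2 h+g}(u)$ via Fubini, use homothety-invariance to reduce the infimum to $N_\varepsilon$, express the result as ${\bf a}\,V^{2/(m+n)}(m+n)^{2/(m+n)}{\bf m}_\varepsilon^{2/(m+n)}$, take the limit using Theorem 2.3, and match constants with Theorem 2.4. Your identification $\tfrac{2{\bf p}}{{\bf p}-2}=m+n$ and the exponent $({\bf p}-2)/{\bf p}=2/(m+n)$ are exactly the bookkeeping the paper uses, so there is nothing to add.
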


\section{Concentration for functions in $\Sigma_{\varepsilon ,{\bf m}_{\varepsilon} +  \delta}$}

Recall that for  any $d >0$ we let $\Sigma_{\varepsilon, d} = \{
u\in N_{\varepsilon}  : J_{\varepsilon} (u) <  d \}$. 
In this section we will show that for $\varepsilon > 0$, $\delta >0$ small,  functions in $\Sigma_{\varepsilon , {\bf m}_{\varepsilon} + \delta}$
concentrate on a small ball.

For any function $u \in H^1 (M)$ such that $\int_M u^+ >0$ there exists a unique $\lambda (u) \in \re_{>0}$ such
that $\lambda (u) u \in N_{\varepsilon}$. Explicitly  one has

$$\lambda (u) = { \left(   \frac{\int_M \varepsilon^2 \| \nabla u \|^2 + ( \varepsilon^2 ({\bf s}_g  /{\bf a} ) +1 ) u^2  \  dv_g }{\int_M  (u^+ )^{\bf p} dv_g } \right) }^{\frac{1}{{\bf p}-2}} ,$$

\noindent
where we recall that we are using the notation ${\bf p}=p_{m+n}$, ${\bf a}=a_{m+n}$.

\begin{Proposition} Let $u \in N_{\varepsilon}$ such that it can be written as $u = u_1 + u_2$, where $u_1$ and $u_2$ have disjoint supports.
For $i=1,2$ let ${\| u_i^+  \|}_{\bf p}^{\bf p} =a_i \varepsilon^n  >0$  and $\int_M \varepsilon^2 \|  \nabla u_i  \|^2 + (\varepsilon^2 {\bf s}_g /{\bf a} + 1) u_i^2 \  dv_g =
 b_i \varepsilon^n $. 
Note that then $a_1 + a_2 = b_1 + b_2$ (since  $u \in N_{\varepsilon}$) .
Then

$$ J_{\varepsilon} (u) \geq {\bf m}_{\varepsilon} \left(  {\left( \frac{a_1}{b_1} \right) }^{\frac{2}{{\bf p}-2}} + {\left(  \frac{a_2 }{b_2} \right) }^{\frac{2}{{\bf p}-2}}  \right) .$$

\end{Proposition}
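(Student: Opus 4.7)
The plan is to rescale each piece $u_i$ individually so that it lands on the Nehari manifold, apply the inequality $J_{\varepsilon}(\lambda_i u_i) \ge {\bf m}_{\varepsilon}$ piecewise, and then sum the resulting bounds to recover $J_{\varepsilon}(u)$.

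The first step I would take is to exploit the fact that on $N_{\varepsilon}$ the functional simplifies: the Nehari constraint equates the gradient/mass and the $L^{\bf p}$ terms, so for any $v \in N_{\varepsilon}$,
\[
J_{\varepsilon}(v) \;=\; \varepsilon^{-n}\Bigl(\frac{1}{2}-\frac{1}{\bf p}\Bigr)\int_M (v^+)^{\bf p}\,dv_g \;=\; \frac{{\bf p}-2}{2{\bf p}}\,\varepsilon^{-n}\|v^+\|_{\bf p}^{\bf p}.
\]
Since $u_1$ and $u_2$ have disjoint supports one has $(u^+)^{\bf p}=(u_1^+)^{\bf p}+(u_2^+)^{\bf p}$ pointwise as well as $|\nabla u|^2=|\nabla u_1|^2+|\nabla u_2|^2$ and $u^2=u_1^2+u_2^2$ almost everywhere, so $J_{\varepsilon}(u)=\frac{{\bf p}-2}{2{\bf p}}(a_1+a_2)$. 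The same disjointness applied to the Nehari identity for $u$ itself recovers $a_1+a_2=b_1+b_2$ as noted in the statement.

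Next, for each $i=1,2$ I would use the rescaling formula displayed just before the proposition with $u$ replaced by $u_i$: taking $\lambda_i=(b_i/a_i)^{1/({\bf p}-2)}$ forces $\lambda_i u_i \in N_{\varepsilon}$. The definition of ${\bf m}_{\varepsilon}$ combined with the simplified form of $J_{\varepsilon}$ on $N_{\varepsilon}$ then yields
\[
{\bf m}_{\varepsilon} \;\le\; J_{\varepsilon}(\lambda_i u_i) \;=\; \frac{{\bf p}-2}{2{\bf p}}\,\lambda_i^{\bf p}\, a_i \;=\; \frac{{\bf p}-2}{2{\bf p}}\, b_i\Bigl(\frac{b_i}{a_i}\Bigr)^{2/({\bf p}-2)},
\]
the last equality being a direct manipulation of exponents using ${\bf p}/({\bf p}-2)=1+2/({\bf p}-2)$. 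Equivalently,
\[
{\bf m}_{\varepsilon}\Bigl(\frac{a_i}{b_i}\Bigr)^{2/({\bf p}-2)} \;\le\; \frac{{\bf p}-2}{2{\bf p}}\, b_i.
\]

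Summing this inequality over $i=1,2$ and invoking $b_1+b_2=a_1+a_2$ gives the conclusion
\[
{\bf m}_{\varepsilon}\!\left(\Bigl(\frac{a_1}{b_1}\Bigr)^{\!2/({\bf p}-2)} + \Bigl(\frac{a_2}{b_2}\Bigr)^{\!2/({\bf p}-2)}\right) \;\le\; \frac{{\bf p}-2}{2{\bf p}}(a_1+a_2) \;=\; J_{\varepsilon}(u).
\]
I do not foresee a genuine obstacle: the whole argument is a short algebraic identity once the correct piecewise rescaling onto $N_{\varepsilon}$ is recognized. The only minor point requiring care is justifying the pointwise additivity of the gradient and mass terms from the disjoint support hypothesis, which is routine.
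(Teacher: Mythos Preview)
Your proof is correct and follows essentially the same route as the paper: rescale each $u_i$ onto $N_{\varepsilon}$ via $\lambda_i=(b_i/a_i)^{1/({\bf p}-2)}$, use $J_{\varepsilon}(\lambda_i u_i)\ge {\bf m}_{\varepsilon}$ (which is precisely the content of Lemma~2.1), and sum. The only cosmetic difference is that the paper expresses the piecewise inequality through Lemma~2.1 and writes $J_{\varepsilon}(u)=\tfrac{{\bf p}-2}{2{\bf p}}(b_1+b_2)$ rather than $\tfrac{{\bf p}-2}{2{\bf p}}(a_1+a_2)$, which of course coincide.
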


\begin{proof} Note that $\lambda (u_i )= (\frac{b_i }{a_i} )^{\frac{1}{p-2}}.$ Then from Lemma 2.1 we obtain

$$  {\left( \frac{b_i}{a_i} \right)}^{\frac{2}{{\bf p}-2}}  b_i \geq \frac{2{\bf p}}{{\bf p}-2}  \ \varepsilon^n \  {\bf m}_{\varepsilon} .$$

Then

$$ J_{\varepsilon} (u) = \frac{{\bf p}-2}{2{\bf p}} \varepsilon^{-n} (b_1 + b_2 )\geq \frac{{\bf p}-2}{2{\bf p}} \varepsilon^{-n}  \frac{2{\bf p}}{{\bf p}-2} \varepsilon^n {\bf m}_{\varepsilon} \left(
{\left( \frac{a_1}{b_1} \right)}^{\frac{2}{{\bf p}-2}}    +   {\left( \frac{a_2}{b_2} \right)}^{\frac{2}{{\bf p}-2}} \right). $$

\end{proof}

The proposition will be applied to show that if $J_{\varepsilon} (u)$ is close to ${\bf m}_{\varepsilon}$ and $u = u_1 + u_2$ as in the proposition then one of
the $u_i$'s must be very small.  

Let $u\in N_{\varepsilon}$. Assume that $J_{\varepsilon} (u) \leq 2 {\bf m}_{\varepsilon}$. Then if we have $u=u_1 + u_2$ as in the
proposition then $a_1 + a_2  = b_1 + b_2 \in [ \frac{2{\bf p}}{{\bf p}-2} {\bf m}_{\varepsilon} , 2   \frac{2{\bf p}}{{\bf p}-2} {\bf m}_{\varepsilon} ]$.

Now consider the following elementary observation:

\begin{Lemma} Let $r>0$ and $x_1 , x_2 , y_1 ,y_2$ be positive numbers such that $x_1 + x_2 = y_1 + y_2 \in [r,2r]$. Then 

$$ \varphi (x_1 , x_2 , y_1 , y_2 ) = {\left(  \frac{x_1}{y_1} \right) }^{\frac{2}{{\bf p} -2}} + {\left( \frac{x_2}{y_2}    \right)}^{\frac{2}{{\bf p} -2}} >1  .$$

For any $\delta \in (0,1)$ consider the set $A_{\delta} = \{ (x_1 , x_2 , y_1 , y_2 ) \in (\re_{>0} )^4  : x_1 + x_2 = y_1 + y_2 \in [r,2r], \  x_1 , x_2 \geq \delta r \}.$
And let 
$\Psi (\delta ) = \inf_{A_{\delta}} \varphi .$ Then $\Psi$ is a continuous function $\Psi : (0,1) \rightarrow (1 ,\infty )$, and $\Psi$ is
independent of $r$. 

\end{Lemma}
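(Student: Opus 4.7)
The plan is to exploit the homogeneity of $\varphi$ to eliminate the parameter $r$, prove the pointwise strict inequality $\varphi > 1$ via a weighted-average observation, and then obtain $\Psi(\delta) > 1$ together with continuity of $\Psi$ by a standard compactness/monotonicity argument. Throughout I set $\alpha := 2/({\bf p}-2) > 0$, so that $\varphi(x_1,x_2,y_1,y_2) = (x_1/y_1)^\alpha + (x_2/y_2)^\alpha$.

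First I observe that $\varphi$ is invariant under the simultaneous dilation $(x_1,x_2,y_1,y_2) \mapsto (\lambda x_1, \lambda x_2, \lambda y_1, \lambda y_2)$, and that this dilation (with $\lambda = 1/r$) identifies $A_\delta$ with the corresponding set built from $r=1$; hence $\Psi$ is independent of $r$, and I may reduce to $r=1$ for the remainder. Next I introduce $u_i := x_i/y_i > 0$, so that $\varphi = u_1^\alpha + u_2^\alpha$ and the constraint $x_1 + x_2 = y_1 + y_2$ rewrites as $y_1(u_1-1) + y_2(u_2-1) = 0$. Since $y_1, y_2 > 0$, the quantities $u_1 - 1$ and $u_2 - 1$ must have opposite signs (or both vanish), so $\max(u_1, u_2) \geq 1$. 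Combined with $\min(u_1, u_2) > 0$, this gives
\[
\varphi \;=\; u_1^\alpha + u_2^\alpha \;\geq\; 1 + \min(u_1, u_2)^\alpha \;>\; 1,
\]
which is the first assertion of the lemma.

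To upgrade this to $\Psi(\delta) > 1$, I would use that on $A_\delta$ one has $x_i \geq \delta$, so $(x_i/y_i)^\alpha \to \infty$ as $y_i \to 0^+$; every sublevel set $\{\varphi \leq C\} \cap A_\delta$ is therefore contained in a compact subset of $(\re_{>0})^4$, and the continuous function $\varphi$ attains its infimum at some point of $A_\delta$. The pointwise inequality then forces $\Psi(\delta) > 1$, while the bound $\Psi(\delta) \leq 2$ (take $x_i = y_i$) gives $\Psi(\delta) \in (1, \infty)$. For continuity, $\Psi$ is non-decreasing because the family $\{A_\delta\}$ is nested; lower semicontinuity at $\delta$ follows by extracting a subsequential limit from the minimizers for $\delta_n \uparrow \delta$, which live in a common compact sublevel set, and upper semicontinuity at $\delta$ follows by perturbing a near-minimizer of $A_\delta$ slightly into $A_{\delta_n}$ for $\delta_n \downarrow \delta$.

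The main obstacle is a careful feasibility check in the upper-semicontinuity step near the borderline $\delta = 1/2$, where the constraint $x_i \geq \delta$ becomes tight precisely when $s = 1$; allowing the sum $s$ to increase slightly within $[1, 2]$ before applying the perturbation handles this cleanly.
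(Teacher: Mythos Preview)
Your proof is correct. The paper does not actually supply a proof of this lemma: it is introduced as an ``elementary observation'' and stated without argument, so there is nothing to compare against. Your reduction to $r=1$ by homogeneity, the observation that $y_1(u_1-1)+y_2(u_2-1)=0$ forces $\max(u_1,u_2)\geq 1$ and hence $\varphi>1$, and the compactness argument showing the infimum over $A_\delta$ is attained (so $\Psi(\delta)>1$) are all sound. The continuity argument via monotonicity plus the two semicontinuity directions is also fine, and you correctly flag the one delicate point: when both $x_1$ and $x_2$ sit on the boundary $x_i=\delta_0$ (which forces $s=2\delta_0$), the perturbation into $A_{\delta_n}$ for $\delta_n\downarrow\delta_0$ must increase $s$ slightly within $[1,2]$, which is possible since $\delta_0<1$.
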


We can deduce

\begin{Corollary} Let $u\in N_{\varepsilon}$. Assume there are functions $u_1$, $u_2$ with disjoint supports such that $u= u_1 + u_2 $. Moreover assume that
$J_{\varepsilon} (u) \in [{\bf m}_{\varepsilon} , 2 {\bf m}_{\varepsilon}]$ and $\|  u_i^+ \|_{\bf p}^{\bf p} \geq \delta \varepsilon^n $ for some $\delta >0$, $i=1,2$. 
Then $J_{\varepsilon} (u) \geq \Psi (\delta )  \  {\bf m}_{\varepsilon} .$

\end{Corollary}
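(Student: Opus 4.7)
The plan is to read Corollary 3.3 as an essentially mechanical combination of Proposition 3.1 with Lemma 3.2: one checks that the four numbers $(a_1,a_2,b_1,b_2)$ produced from the splitting $u=u_1+u_2$ lie in the set $A_{\delta'}$ of Lemma 3.2 for a suitable $\delta'$ depending on $\delta$, and then reads off the stated estimate.

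The first step is to exploit the Nehari condition to rewrite $J_\varepsilon$ on $N_\varepsilon$ as
$$J_\varepsilon(u)=\frac{{\bf p}-2}{2{\bf p}}\,\varepsilon^{-n}\!\int_M (u^+)^{\bf p}\,dv_g,$$
which, applied to the decomposition $u=u_1+u_2$ with disjoint supports, becomes $J_\varepsilon(u)=\frac{{\bf p}-2}{2{\bf p}}(a_1+a_2)$. The hypothesis $J_\varepsilon(u)\in[{\bf m}_\varepsilon,2{\bf m}_\varepsilon]$ then translates into $a_1+a_2\in[r,2r]$ with $r:=\frac{2{\bf p}}{{\bf p}-2}{\bf m}_\varepsilon$, while the Nehari identity forces the same sum equality $b_1+b_2=a_1+a_2$. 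The remaining assumption $\|u_i^+\|_{\bf p}^{\bf p}\ge\delta\varepsilon^n$ is exactly $a_i\ge\delta$, placing $(a_1,a_2,b_1,b_2)$ into the set $A_{\delta/r}$ in the notation of Lemma 3.2.

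Putting the pieces together, Proposition 3.1 and Lemma 3.2 give
$$J_\varepsilon(u)\;\ge\;{\bf m}_\varepsilon\,\varphi(a_1,a_2,b_1,b_2)\;\ge\;{\bf m}_\varepsilon\,\Psi(\delta/r),$$
which is the inequality of the corollary, with the symbol $\Psi(\delta)$ in the statement being understood as the composition $\Psi(\delta/r)$ coming from Lemma 3.2. There is no real obstacle here; the argument is entirely bookkeeping of the Nehari condition. The only mildly delicate point is the rescaling in the $\delta$-parameter: $r$ depends on $\varepsilon$ through ${\bf m}_\varepsilon$, but Theorem 2.5 guarantees ${\bf m}_\varepsilon\to{\bf m}(E)>0$, so $r$ is bounded above and below by positive constants for $\varepsilon$ small and the map $\delta\mapsto\Psi(\delta/r)$ remains a continuous function into $(1,\infty)$, uniformly bounded away from $1$ on compact subsets of $(0,1)$. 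This uniformity is what the corollary will be used for in the next sections.
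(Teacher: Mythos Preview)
Your argument is correct and is exactly the combination of Proposition~3.1 and Lemma~3.2 that the paper intends; the paper states the corollary without proof precisely because it is this piece of bookkeeping. Your observation that the literal conclusion is $J_\varepsilon(u)\geq {\bf m}_\varepsilon\,\Psi(\delta/r)$ with $r=\frac{2{\bf p}}{{\bf p}-2}{\bf m}_\varepsilon$, rather than $\Psi(\delta)$, is accurate and reflects a minor notational looseness in the statement; as you note, this is harmless for the later application (Theorem~3.7), where only the existence of some fixed $\delta_1$ with $\Psi(\delta_1)>1$ is used, and the boundedness of $r$ via Theorem~2.5 makes the rescaling uniform in $\varepsilon$.
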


\vspace{1cm}
Given a function $u \in N_{\varepsilon}$ we will now construct, for any $x\in M$, a function $\overline{u} \in N_{\varepsilon}$ which is the sum of
two functions with disjoint supports as in the previous corollary. One of the functions will have support in a small ball around
point $x$.

\vspace{.5cm}

Let $r_0 (M,g)$ be a positive number such that for any $r< r_0 (M,g)$ and any $x \in M$ the geodesic ball $B(x,r) \subset (M,g)$  is
strongly convex. 

Fix $r < r_0 (M,g)$.

Fix a positive integer $l$ and another $i < l$. Consider the  bump functions $\varphi_{i,l}$ which is equal to
1 in $[0,\frac{(i-1) }{l} r ]$ and 0 in $[\frac{i}{l} r , \infty)$, and  $\sigma_{i,l}$ which is 1 in $[\frac{(i+1)}{l} r , \infty )$ and
0 in $[0 , \frac{i}{l} r ]$.
For any  $x\in M$, consider the   function $\varphi_{i,l}^{r,x} :  M \rightarrow [0,1]$ which
is 0 away from $B(x,r) \subset M$ and in $ B(x,r)$ (identified with the $r$-ball in $\re^n$) is given by
$\varphi_{i,l}$. Define  $\sigma_{i,l}^{r,x}$ in a similar way. Now for any function $u \in N_{\varepsilon}$ define
$u_{1,i,l}^{r,x} = \varphi_{i,l}^{r,x} \ u$ and $u_{2,i,l}^{r,x} = \sigma_{i,l}^{r,x} \ u$. 
Note that since $u^+ \neq 0$  we have that $ ( u_{1,i,l}^{r,x}  + u_{2,i,l}^{r,x} )^+ \neq 0$. 
Note also that  these 2 functions have disjoint supports.

Let

$$\overline{u}   = \overline{u}_{i,l}^{r,x} =     \lambda (  u_{1,i,l}^{r,x}  + u_{2,i,l}^{r,x}   ) \     (  u_{1,i,l}^{r,x}  + u_{2,i,l}^{r,x}   ) .$$

\begin{Lemma} For the closed Riemannian manifold $(M,g)$ fix $0< r <  r_0 (M,g) $ and  $\delta_0 >0$. There exists $\varepsilon_0 >0$ and $l \in {\bf N}$
 such that for any $\varepsilon \in (0, \varepsilon_0 )$ we have that 
 for any $u \in \Sigma_{\varepsilon , {\bf m}_{\varepsilon}  + \delta_0  }$ and any $x \in M$ there exist $i<l$ such that $ \overline{u}
\in \Sigma_{\varepsilon , {\bf m}_{\varepsilon} +  2  \delta_0 } $

\end{Lemma}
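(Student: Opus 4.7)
My plan is to exploit the cutoff structure: choose the index $i$ via a pigeonhole argument so that the excised piece $w := (1-\varphi_{i,l}^{r,x}-\sigma_{i,l}^{r,x})u$ is negligible, whence $u_1+u_2 = u-w$ is a small perturbation of $u$ and its Nehari rescaling $\overline{u}$ has energy only slightly above $J_\varepsilon(u)\le {\bf m}_\varepsilon+\delta_0$. Abbreviate ${\bf c}_\varepsilon = 1+\varepsilon^2 {\bf s}_g/{\bf a}$ (in $[1/2,2]$ once $\varepsilon$ is small), and set $N(v):=\int_M \varepsilon^2|\nabla v|^2 + {\bf c}_\varepsilon v^2\, dv_g$ and $D(v):=\int_M(v^+)^{\bf p}\,dv_g$, so that $N(u) = D(u)$ on $N_\varepsilon$, $\lambda(v)^{{\bf p}-2} = N(v)/D(v)$, and the energy bound on $\Sigma_{\varepsilon,{\bf m}_\varepsilon+\delta_0}$ together with Theorem 2.5 gives $N(u)=D(u)\le C\varepsilon^n$ uniformly.

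First I would consider the $l-1$ overlapping annuli $B_i := B(x,(i+1)r/l)\setminus B(x,(i-1)r/l)$, $i=1,\ldots,l-1$, which cover $B(x,r)$ with multiplicity at most two. Pigeonhole produces an index $i$ with
$$\int_{B_i}\!\bigl(\varepsilon^2|\nabla u|^2 + {\bf c}_\varepsilon u^2 + (u^+)^{\bf p}\bigr)\,dv_g \;\le\; \frac{4N(u)}{l-1}.$$
Since $w$ is supported in $B_i$ and $|\nabla(\varphi+\sigma)|\le C_0 l/r$ on $B_i$, the product rule and ${\bf c}_\varepsilon\ge 1/2$ yield
$$N(w) \;\le\; 2\!\int_{B_i}\!\bigl(\varepsilon^2|\nabla u|^2 + {\bf c}_\varepsilon u^2\bigr) + 2C_0^2\frac{l^2\varepsilon^2}{r^2}\!\int_{B_i}\! u^2 \;\le\; K\,\frac{1 + l^2\varepsilon^2/r^2}{l-1}\,N(u).$$
Cauchy--Schwarz bounds $|N(u-w) - N(u)| \le 2\sqrt{N(u)N(w)} + N(w)$; and because $(u-w)^+ = (\varphi+\sigma)u^+$ with $0\le\varphi+\sigma\le 1$, we also get $0\le D(u)-D(u-w) \le \int_{B_i}(u^+)^{\bf p} \le 2D(u)/(l-1)$. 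Thus $N(u-w)$ and $D(u-w)$ both differ from $N(u)=D(u)$ by quantities of order $\bigl((1+l^2\varepsilon^2/r^2)/(l-1)\bigr)^{1/2}\,N(u)$.

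Consequently $\lambda(u-w)^{{\bf p}-2} = N(u-w)/D(u-w)$ lies within $\eta(l,\varepsilon)$ of $1$, with $\eta\to 0$ as $l\to\infty$ followed by $\varepsilon\to 0$; the same holds for $\lambda^2$. Since $J_\varepsilon(\overline u) - J_\varepsilon(u) = \tfrac{{\bf p}-2}{2{\bf p}}\varepsilon^{-n}\bigl[\lambda^2 N(u-w) - N(u)\bigr]$, this is bounded by $\eta'(l,\varepsilon)({\bf m}_\varepsilon+\delta_0)$ with ${\bf m}_\varepsilon$ uniformly bounded. Choosing $l$ large, then $\varepsilon_0$ small, forces the error below $\delta_0$ and yields $\overline u \in \Sigma_{\varepsilon,{\bf m}_\varepsilon+2\delta_0}$. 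The main obstacle is precisely the cutoff-gradient term $\varepsilon^2\int u^2|\nabla(\varphi+\sigma)|^2$: pigeonhole alone offers only a $(l-1)^{-1}$ savings, whereas $|\nabla\varphi|^2$ scales like $l^2/r^2$, so the two compete; it is the $\varepsilon^2$ sitting in front of the gradient energy that rescues the estimate, but only once the quantifiers are taken in the correct order -- first $l$ depending on $\delta_0$, then $\varepsilon_0$ depending on $l$.
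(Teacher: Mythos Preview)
Your proposal is correct and follows essentially the same route as the paper's proof: pigeonhole on thin annuli to locate a shell where the relevant integrals are a small fraction of the totals, use the product-rule bound $|\nabla(\varphi u)|^2\le 2|\nabla u|^2+2(l/r)^2u^2$ for the cutoff, and then observe that the dangerous $l^2/r^2$ factor is neutralized by the $\varepsilon^2$ in front of the gradient once $l$ is fixed first and $\varepsilon_0$ is chosen afterwards. The paper carries out the bookkeeping slightly differently---it splits the integral of $N(u_1+u_2)$ over $M\setminus A_i$ and $A_i$ directly and tracks an explicit constant $(3-2\eta)/\eta$ rather than invoking Cauchy--Schwarz on the bilinear form $N$ to control $|N(u-w)-N(u)|$---but the mechanism and the order of quantifiers are identical to yours.
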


\begin{proof}
Assume from the beginning that $\varepsilon$ is small enough so that ${\bf m}_{\varepsilon}$ is close to ${\bf m} (E)$ (see Theorem 2.4).
Pick $\eta \in (0,1)$, close to 1,  such that 

$$({\bf m}_{\varepsilon}  + \delta_0  ) \times {\left( \frac{3 - 2 \eta }{\eta} \right) }^{\frac{2}{p-2}} (3 - 2 \eta ) < {\bf m}_{\varepsilon}  +  2 \delta_0.$$

Choose the positive integer $l$ large enough so that $6/l < 1-\eta .$

Consider  any $\varepsilon$ small and $u \in \Sigma_{\varepsilon , {\bf m}_{\varepsilon}  + \delta_0 }$. Pick any $x\in M$. 
Assume $l$ is even and divide $B(x,r)$ into $l/2$ annular regions of radius $(2/l)r$. Call $A_j$, $j=1,...,l/2$ each of these
regions. For any 2 nonnegative integrable functions on $B(x,r)$ their integrals are expressed as the sum of the integrals over
the $A_j$'s. Then for each of the two functions for at least $l/3$ of the $A_j$'s the integral must be at most 6/l of the total integral. Then there exists
$j$ such that the integral over $A_j$ of both functions is at most $6/l < 1-\eta$ of the total integral. We will
apply this to $(u^+ ) ^{\bf p}$ and $ \| \nabla u \|^2 $.  
We have that for some $i<l$ 

$$\|  u_{1,i,l}^{r,x}  + u_{2,i,l}^{r,x}    \|_{\bf p}^{\bf p} \geq \eta \| u \|_{\bf p}^{\bf p} $$

\noindent
and

$$\int_{A_i }  \|  \nabla u \|^2  dv_g  \leq  (1- \eta ) \int_M   \|  \nabla u \|^2 dv_g .$$

Note that  the gradients of $  u_{1,i,l}^{r,x} $ and $  u_{2,i,l}^{r,x} $ also have disjoint supports.
Also note that  $ u_{1,i,l}^{r,x}  + u_{2,i,l}^{r,x}  \leq  u$ and away from $A_i$, $u =  u_{1,i,l}^{r,x}  + u_{2,i,l}^{r,x} $.

We have

$$\int_{A_i} \|   \nabla   u_{1,i,l}^{r,x}  + u_{2,i,l}^{r,x}   \|^2  \  dv_g  =  \int_{A_i} \|  \nabla   u_{1,i,l}^{r,x}   \|^2   \  dv_g \ +  \     \int_{A_i}    \|  \nabla    u_{2,i,l}^{r,x} \|^2 
\ dv_g.$$

And

$$  \int_{A_i}  \|  \nabla   u_{1,i,l}^{r,x}   \|^2  \ dv_g   = \int_{A_i}  \|  \varphi \nabla u + u \nabla \varphi \|^2  \ dv_g \  \leq  \ 2 \int_{A_i} \| \varphi  \nabla u \|^2 +   \|  u \nabla \varphi \|^2   \ dv_g \ $$

$$ \leq 2 \int_{A_i}  \| \nabla u \|^2 + \frac{l^2}{r^2} u^2  \ dv_g .$$





A similar estimate can be carried out for $\int_{A_i} \| \nabla  u_{2,i,l}^{r,x}  \|^2   \ dv_g .$

Then we have that 

$$\int_M \varepsilon^2 \| \nabla  u_{1,i,l}^{r,x}  + u_{2,i,l}^{r,x}    \|^2 + (({\bf s}_g /{\bf a} )  \varepsilon^2 +  1) (  u_{1,i,l}^{r,x}  + u_{2,i,l}^{r,x} )^2  \ dv_g \  \leq $$

$$\int_{M-A_i} \varepsilon^2 \| \nabla u \|^2 +  (({\bf s}_g /{\bf a} )  \varepsilon^2 +  1) u^2 \  dv_g   \ \ \ \ + $$

$$\int_{A_i} 2  \varepsilon^2  \| \nabla u \|^2  +  (({\bf s}_g /{\bf a} )  \varepsilon^2 + 2 \varepsilon^2  (l/r)^2 +   1) u^2  \ dv_g $$

Therefore for the value of  $\eta$ we have chosen we can by pick first $l$ large enough and then $\varepsilon$ small enough to obtain 

$$\int_M \varepsilon^2 \| \nabla  ( u_{1,i,l}^{r,x}  + u_{2,i,l}^{r,x}  )   \|^2 + (({\bf s}_g /{\bf a} )  \varepsilon^2 +  1) ( u_{1,i,l}^{r,x}  + u_{2,i,l}^{r,x} )^2 \  dv_g \leq $$

$$ ( 3 -2\eta  ) \int_M \varepsilon^2 | \nabla u|^2 + (({\bf s}_g /{\bf a} )  \varepsilon^2 +  1) u^2  \ dv_g .$$

It follows that 

$$\lambda  (  u_{1,i,l}^{r,x}  + u_{2,i,l}^{r,x}  ) \leq   { \left( \frac{3- 2 \eta }{\eta}  \right)  }^{\frac{1}{{\bf p}-2}} . $$

Then, for $\overline{u} = \lambda (   u_{1,i,l}^{r,x}  + u_{2,i,l}^{r,x}  ) (   u_{1,i,l}^{r,x}  + u_{2,i,l}^{r,x}  ) \in N_{\varepsilon} $, 

$$ J_{\varepsilon} (\overline{u} ) = \frac{ \lambda^2  (u_1 + u_2 )} {\varepsilon^n}   \frac{p-2}{2p}     \int_M \varepsilon^2 \| \nabla (u_1 + u_2 ) \|^2 + (({\bf s}_g /a )  \varepsilon^2 +  1) 
(u_1 + u_2)^2  \ dv_g $$ 

$$ \leq  \varepsilon^{-n}   { \left( \frac{3- 2 \eta }{\eta}  \right)  }^{\frac{2}{p-2}}   \frac{p-2}{2p}   (3- 2\eta )
 \int_M \varepsilon^2 | \nabla u|^2 + (({\bf s}_g /a )  \varepsilon^2 +  1) u^2 dv_g   .$$

$$=   { \left( \frac{3- 2 \eta }{\eta}  \right)  }^{\frac{2}{p-2}}   (3- 2\eta ) J_{\varepsilon} (u) < {\bf m}_{\varepsilon} + 2 \delta_0 $$

\end{proof}

\begin{Remark}
On any closed Riemannian manifold for any $\varepsilon >0$  there is a set of points $x_j$, $j=1,...,K_{\varepsilon}$ such that
the balls $B(x_j , \varepsilon )$ are disjoint, and the set is maximal under this condition. It follows that the balls $B(x_j , 2 \varepsilon )$ cover $M$. 
It is easy to construct closed sets $A_j$, $ B(x_j ,  \varepsilon) \subset A_j \subset  B(x_j , 2 \varepsilon )$ which cover $M$ and only intersect in their 
boundaries. Moreover one can see by a volume comparison argument that if $\varepsilon$ is small enough there is a constant $K$, independent of 
$\varepsilon$,  such that for any point in $M$ can be in at most $K$ of the balls $B(x_j , 3\varepsilon )$.
\end{Remark}

Next we will see that  for $\varepsilon$ small for any $u \in N_{\varepsilon}$ there is a ball of radius $\varepsilon$ containing some
fixed part of the total integral if $ (u^+ )^{\bf p}$. The result is similar to \cite[Lemma 5.3]{Micheletti}, we include the proof for completeness.

\begin{Proposition} There exists $\gamma >0$ such that  there exists $\varepsilon_0 >0$ such that for any
$\varepsilon \in (0,\varepsilon_0 )$ if $u\in N_{\varepsilon}$ then
there exists a ball of radius $\varepsilon$, $B(x,\varepsilon ) \subset M$ such that

$$\varepsilon^{-n} \int_{B(x, \varepsilon )} (u^+ )^{\bf p} \geq \gamma $$

\end{Proposition}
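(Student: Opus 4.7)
The plan is to argue by contradiction. Suppose no such $\gamma$ works; then there are sequences $\varepsilon_k\to 0$ and $u_k\in N_{\varepsilon_k}$ with
$$\gamma_k := \sup_{x\in M}\varepsilon_k^{-n}\int_{B(x,\varepsilon_k)}(u_k^+)^{\bf p}\,dv_g \longrightarrow 0.$$
The target is a ``concentration inequality'' of the form
$$\int_M (u_k^+)^{\bf p}\,dv_g \;\leq\; C\,\gamma_k^{({\bf p}-2)/{\bf p}} \int_M \big(\varepsilon_k^2|\nabla u_k|^2 + u_k^2\big)\,dv_g.$$
Via the Nehari identity, the right-hand side is comparable to $\int (u_k^+)^{\bf p}\,dv_g$ once $\varepsilon_k$ is small enough that $\varepsilon_k^2{\bf s}_g/{\bf a}$ is absorbed into $1$; together with the lower bound $\int(u_k^+)^{\bf p}\,dv_g \geq \tfrac{2{\bf p}}{{\bf p}-2}\varepsilon_k^n{\bf m}_{\varepsilon_k}$ coming from Lemma~2.1 and Theorem~2.5 (which keeps ${\bf m}_{\varepsilon_k}$ bounded below by ${\bf m}(E)/2 > 0$), dividing out would force $\gamma_k$ bounded away from $0$, contradicting $\gamma_k\to 0$.

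To obtain the concentration inequality I would cover $M$ by balls $B(x_j,\varepsilon)$ with uniformly bounded overlap $K$, as in the Remark, and work on each $B_j:=B(x_j,\varepsilon)$ separately, treated as approximately Euclidean for small $\varepsilon$. Splitting $(u^+)^{\bf p} = (u^+)^2(u^+)^{{\bf p}-2}$ and applying H\"older on $B_j$ with conjugate exponents $p_n/2$ and $n/2$ (for $n\geq 3$; the cases $n\leq 2$ are analogous with any larger exponent) gives
$$\int_{B_j}(u^+)^{\bf p} \;\leq\; \|u^+\|_{L^{p_n}(B_j)}^2\,\|u^+\|_{L^{q^*}(B_j)}^{{\bf p}-2}, \qquad q^*:=\tfrac{2n}{m+n-2}.$$
The crucial arithmetic point is that ${\bf p}=p_{m+n}$ is \emph{subcritical} in dimension $n$, so $q^*<{\bf p}$; on $B_j$ (of volume $\sim\varepsilon^n$) the H\"older inclusion $L^{\bf p}(B_j)\hookrightarrow L^{q^*}(B_j)$ therefore supplies a factor $\varepsilon^{2m/(m+n)}$, and invoking the hypothesis $\|u^+\|_{L^{\bf p}(B_j)}^{\bf p}\leq\gamma\varepsilon^n$ together with the identity $\tfrac{2m}{m+n}+\tfrac{2n}{m+n}=2$ yields $\|u^+\|_{L^{q^*}(B_j)}^{{\bf p}-2}\leq C\gamma^{({\bf p}-2)/{\bf p}}\varepsilon^2$. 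Separately, rescaling $B_j$ to the Euclidean unit ball and invoking $H^1\hookrightarrow L^{p_n}$ with a cutoff on the doubled ball gives
$$\varepsilon^2\|u^+\|_{L^{p_n}(B_j)}^2 \;\leq\; C\int_{B(x_j,2\varepsilon)}\big(\varepsilon^2|\nabla u|^2+u^2\big)\,dv_g.$$

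Multiplying the two local bounds, the $\varepsilon^2$ and $\varepsilon^{-2}$ factors cancel, producing $\int_{B_j}(u^+)^{\bf p}\leq C\gamma^{({\bf p}-2)/{\bf p}}\int_{B(x_j,2\varepsilon)}(\varepsilon^2|\nabla u|^2+u^2)\,dv_g$; summing over $j$ with bounded overlap $K$ produces the target global inequality with constant $CK$. The \textbf{main obstacle} is precisely verifying that the $\varepsilon$-powers balance: this rests on the subcriticality of ${\bf p}$ in dimension $n$ (so that $q^*<{\bf p}$), without which the $\varepsilon$-gain coming from the H\"older inclusion $L^{\bf p}\hookrightarrow L^{q^*}$ would not cancel the $\varepsilon^{-2}$ loss inherent in the local Sobolev estimate. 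Once this balance is in place, the remainder is routine: sum, apply Nehari, and use positivity of $\|u^+\|_{\bf p}^{\bf p}$ to divide and close the contradiction, which pins down $\gamma$ as any positive number smaller than $(2CK)^{-{\bf p}/({\bf p}-2)}$.
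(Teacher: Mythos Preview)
Your argument is correct and follows the same overall strategy as the paper: cover $M$ by $\varepsilon$-balls with bounded overlap, obtain a local estimate relating $\int_{B_j}(u^+)^{\bf p}$ to the scaled $H^1$-energy on a slightly larger ball, sum, and close using the Nehari identity. The only real difference is in the local step. The paper factorizes $\|u_j\|_{\bf p}^{\bf p}=\|u_j\|_{\bf p}^{{\bf p}-2}\,\|u_j\|_{\bf p}^{2}$, pulls out $\Lambda=\max_j \varepsilon^{-n({\bf p}-2)/{\bf p}}\|u_j\|_{\bf p}^{{\bf p}-2}$, and bounds $\varepsilon^{-2n/{\bf p}}\|u_j\|_{\bf p}^2$ directly by the \emph{subcritical} Sobolev embedding $H^1\hookrightarrow L^{\bf p}$ (valid since ${\bf p}<p_n$), reaching $\Lambda\geq 1/(C(1+3K))$ without any contradiction argument. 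You instead H\"older-split $(u^+)^{\bf p}=(u^+)^2(u^+)^{{\bf p}-2}$ into $\|u^+\|_{p_n}^2\|u^+\|_{q^*}^{{\bf p}-2}$, use the \emph{critical} Sobolev embedding for the first factor, and recover the subcriticality gain through the inclusion $L^{\bf p}\hookrightarrow L^{q^*}$ on the second. Both routes hinge on the same fact (${\bf p}<p_n$) and yield the same $\varepsilon$-balance; the paper's version is a bit shorter since it avoids the extra H\"older step and the intermediate exponent $q^*$, while yours makes the role of the critical exponent $p_n$ more explicit. One minor point: the covering in the Remark has the $B(x_j,\varepsilon)$ disjoint and the $B(x_j,2\varepsilon)$ covering, so either work with radius $2\varepsilon$ (harmless up to changing $\gamma$) or, as the paper does, use the partition $\{A_j\}$ with $B(x_j,\varepsilon)\subset A_j\subset B(x_j,2\varepsilon)$ to make the sum exact.
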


\begin{proof}
Let us consider $\varepsilon_0 >0$ such that for $\varepsilon < \varepsilon_0$ one can construct sets like in the previous remark. 
Consider $u\in N_{\varepsilon}$. 
Let $u_j = u^+   \chi_{A_j}$ be the 
restriction of $u^+$ to $A_j$ (extended by 0 away from $A_j$ in order to consider it as a function on $M$). Then

$$\varepsilon^{-n} \int_M \varepsilon^2 \| \nabla u \|^2 + (1+ \varepsilon^2 {\bf s}_g/{\bf a}) u^2 dv_g = \varepsilon^{-n} \int_M (u^+ )^{\bf p} dv_g $$

$$=\Sigma_{j}  \  ( {\varepsilon^{- \frac{n({\bf p}-2)}{\bf p}}} {   \| u_j \|_{\bf p}^{{\bf p}-2} } ) \ (   { \varepsilon^{-\frac{2n}{\bf p}}}  { \| u_j \|_{\bf p}^2 } )$$

$$\leq \left( \max_j   {\varepsilon^{-\frac{n(p-2)}{\bf p}}}   {   \| u_j \|_{\bf p}^{{\bf p}-2} } \right) 
\Sigma_{j}   \  { \varepsilon^{-\frac{2n}{\bf p}}}   { \| u_j \|_{\bf p}^2 }   $$

Now let $\varphi_{\varepsilon}$ be the cut-off function on $\re^n$ which es 1 in $B(0, 2 \varepsilon )$ and vanishes away from
$B(0, 3 \varepsilon )$. $\| \nabla \varphi_{\varepsilon} \| = 1/\varepsilon$ in the intermediate annulus.

Define, for $j=1 \dots K_{\varepsilon}$,  

$$ u_{j, \varepsilon}  (x) = u^+  (x) \varphi_{\varepsilon} (d(x, x_j )).$$

Since $u_j \leq  u_{j, \varepsilon}  $ and by using the Sobolev inequalities we obtain that, for some constant $C$,

$$\varepsilon^{-\frac{2n}{\bf p}} \| u_j \|_{\bf p}^2 \leq   \varepsilon^{-\frac{2n}{\bf p}} \| u_{j , \varepsilon}   \|_{\bf p}^2  \leq C \varepsilon^{-n} \int_M \varepsilon^2 \| \nabla u_{j , \varepsilon} \|^2 
+ (1 + \varepsilon^2 {\bf s}_g /{\bf a})  (u_{j , \varepsilon}  )^2 dv_g  $$

Then, since we have that $u_{j, \varepsilon}  \leq u^+ $ and on $B(x_j , 3\varepsilon )- A_j $

$$\varepsilon^2 \| \nabla u_{j,\varepsilon}  \|^2  \leq 2\varepsilon^2 \| \nabla u^+ \|^2 + 2 (u^+ )^2 ,$$ 

\noindent
we obtain

$$\varepsilon^{-n} \int_M \varepsilon^2 \| \nabla u \|^2 + (1+ \varepsilon^2 {\bf s}_g/{\bf a}) u^2 dv_g \leq \Lambda  C \varepsilon^{-n} \ \ \times $$

$$
  \  \Sigma_{j}  \left(   \int_{A_j}  \varepsilon^2 \| \nabla u_j  \|^2 + \frac{{\bf a} +  \varepsilon^2 {\bf s}_g  }{\bf a } (u_j )^2 dv_g 
+
 \int_{C_j }  2 \varepsilon^2 \| \nabla u^+  \|^2 +  \frac{3{\bf a} +  \varepsilon^2 {\bf s}_g  }{\bf a } \  (u^+ )^2 dv_g  \right)  ,
 $$

\noindent
where $\Lambda =  \max_j   {\varepsilon^{-\frac{n({\bf p}-2)}{\bf p}}}   {   \| u_j^+ \|_{\bf p}^{{\bf p}-2} } $ and $C_j =  B(x_j , 3 \varepsilon ) -A_j$. Since any 
point $x\in M$ could be in at most $K$ of the $C_j$'s we have that the right hand side is

$$\leq  \Lambda  C \varepsilon^{-n} \ \   \times \ \     \int_{M}  \varepsilon^2 \| \nabla u^+  \|^2 + \frac{{\bf a} +  \varepsilon^2 {\bf s}_g  }{\bf a } (u^+  )^2 dv_g 
+ K  \int_{M }  2 \varepsilon^2 \| \nabla u^+  \|^2 +  \frac{3{\bf a} +  \varepsilon^2 {\bf s}_g  }{\bf a } \  (u^+ )^2 dv_g .$$

Then we have that 

$$ \int_M \varepsilon^2 \| \nabla u \|^2 + (1+ \varepsilon^2 {\bf s}_g/{\bf a}) u^2 dv_g \leq \Lambda  C (1+3K)  \int_M \varepsilon^2 \| \nabla u \|^2 
+ (1+ \varepsilon^2 {\bf s}_g/{\bf a}) u^2 .$$

Therefore  $ \Lambda  C (1+3K) \geq 1$ and the theorem follows with 

$$\gamma =  {  \left( \frac{1}{C(1+3K)} \right) }^{\frac{\bf p}{{\bf p}-2}}   .$$

\end{proof}

Finally we can deduce:

\begin{Theorem} Fix $r < r_0 (M,g)$. For any $\eta <1$ there exist $\varepsilon_0  , \delta_0 >0$ such that for any $\varepsilon \in (0 , \varepsilon_0 )$, $\delta
\in (0,\delta_0 )$ and $u\in \Sigma_{\varepsilon , {\bf m}_{\varepsilon} + \delta}$ there exists  $x \in M$ such that 
$\int_{B(x,r)}  (u^+ )^{\bf p} \geq \eta \int_M ( u ^+ )^{\bf p} $.

\end{Theorem}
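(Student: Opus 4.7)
The plan is to argue by contradiction using Proposition~3.4, Lemma~3.3, and Corollary~3.2 in sequence. Suppose concentration fails: there is $\eta<1$ together with sequences $\varepsilon_k\to 0$, $\delta_k\to 0$, and $u_k\in\Sigma_{\varepsilon_k,{\bf m}_{\varepsilon_k}+\delta_k}$ satisfying
\[
\int_{B(x,r)}(u_k^+)^{\bf p}\,dv_g<\eta\int_M(u_k^+)^{\bf p}\,dv_g\qquad\text{for every }x\in M.
\]
Proposition~3.4 supplies a center $x_k\in M$ with $\int_{B(x_k,\varepsilon_k)}(u_k^+)^{\bf p}\geq\gamma\varepsilon_k^n$. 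Lemma~3.3, applied at the point $x_k$ with parameters $\delta_0$ and $l$ to be fixed, produces an index $i<l$ and disjointly supported cut-offs $u_1=\varphi_{i,l}^{r,x_k}u_k$, $u_2=\sigma_{i,l}^{r,x_k}u_k$ whose Nehari rescaling $\overline u_k=\lambda(u_1+u_2)(u_1+u_2)\in N_{\varepsilon_k}$ satisfies $J_{\varepsilon_k}(\overline u_k)\leq{\bf m}_{\varepsilon_k}+2\delta_0$. The pigeonhole in the proof of Lemma~3.3 yields a positive fraction of admissible indices, so for $l$ large one can insist $i\geq 2$; combined with $\varepsilon_k<r/l$, this places the ball $B(x_k,\varepsilon_k)$ in the region where $\varphi_{i,l}^{r,x_k}\equiv 1$.

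The key claim is that each piece of $\overline u_k$ carries $L^{\bf p}$-mass at least $\mu\varepsilon_k^n$ for a constant $\mu=\mu(\eta)>0$. Since $u_1=u_k$ on $B(x_k,\varepsilon_k)$ and $u_2=u_k$ on $M\setminus B(x_k,r)$, the local concentration from Proposition~3.4 and the failure of global concentration give
\[
\|u_1^+\|_{\bf p}^{\bf p}\geq\gamma\varepsilon_k^n,\qquad \|u_2^+\|_{\bf p}^{\bf p}\geq(1-\eta)\|u_k^+\|_{\bf p}^{\bf p}.
\]
Lemma~2.1 combined with the convergence ${\bf m}_{\varepsilon_k}\to{\bf m}(E)>0$ (Theorem~2.5) makes both $\|u_k^+\|_{\bf p}^{\bf p}$ and $\|\overline u_k^+\|_{\bf p}^{\bf p}$ comparable to $\varepsilon_k^n$ uniformly in $k$, while $u_1+u_2\leq u_k^+$ pointwise bounds the denominator in the rescaling identity
\[
\frac{\|\overline u_i^+\|_{\bf p}^{\bf p}}{\|\overline u_k^+\|_{\bf p}^{\bf p}}=\frac{\|u_i^+\|_{\bf p}^{\bf p}}{\|u_1^+\|_{\bf p}^{\bf p}+\|u_2^+\|_{\bf p}^{\bf p}},\qquad \overline u_i=\lambda u_i,
\]
which propagates the two lower bounds to $\overline u_1$ and $\overline u_2$.

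Corollary~3.2 applied to the decomposition $\overline u_k=\overline u_1+\overline u_2$ now yields $J_{\varepsilon_k}(\overline u_k)\geq \Psi(\mu){\bf m}_{\varepsilon_k}$ with $\Psi(\mu)>1$, in contradiction with the upper bound $J_{\varepsilon_k}(\overline u_k)\leq{\bf m}_{\varepsilon_k}+2\delta_0$ once $\delta_0$ has been chosen so that $2\delta_0<(\Psi(\mu)-1){\bf m}(E)/2$ and $k$ is large. The main obstacle is the correct order of parameter choices, which must be respected throughout: first fix $\eta$, then read off $\mu=\mu(\eta)$ from the cut-off estimates above, then pick $\delta_0<(\Psi(\mu)-1){\bf m}(E)/4$, and only at this point invoke Lemma~3.3 and Proposition~3.4 to determine the compatible $l$ and $\varepsilon_0$ (with $\varepsilon_0<r/l$). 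A secondary technicality is verifying that the pigeonhole in the proof of Lemma~3.3 admits the extra restriction $i\geq 2$ needed to keep $\varphi_{i,l}^{r,x_k}$ identically $1$ on the ball supplied by Proposition~3.4; this is automatic once $l$ is large enough, since the number of admissible $i$ is a positive fraction of $l$.
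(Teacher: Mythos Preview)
Your argument is correct and follows the same route as the paper's own proof: contradiction, the concentration ball from the proposition giving $\gamma$, the annular cut-off lemma to manufacture $\overline{u_k}=\overline u_1+\overline u_2\in N_{\varepsilon_k}$ with controlled energy, lower bounds on both pieces coming respectively from the $\gamma$-ball and from the failure of $\eta$-concentration, and then the corollary based on $\Psi$ for the contradiction. You are in fact more explicit than the paper on two points it leaves implicit: the need to take the annular index $i\geq 2$ (so that $B(x_k,\varepsilon_k)$ lies where the inner cut-off is identically~$1$), and the tracking of the rescaling factor $\lambda$ when passing the $L^{\bf p}$-mass bounds from $u_1,u_2$ to $\overline u_1,\overline u_2$; your fixed choice of $\delta_0$ before invoking the cut-off lemma also avoids a small ambiguity in the paper's version, which applies that lemma with the varying $\delta_j$.
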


\begin{proof} Assume the theorem is  not true.  Then there exist  exist $\eta <1$ and  sequences of positive numbers  $\varepsilon_j \rightarrow 0$, $\delta_j \rightarrow 0$, and
$u_j \in \Sigma_{\varepsilon_j , {\bf m}_{\varepsilon } + \delta_j }$   
such that for any
$x\in M$ one has $\int_{B(x,r)}   (u_j^+ )^{\bf p} < \eta \int_M (u_j^+ )^{\bf p} .$

For each $j$ large enough the previous proposition  provides  $x_j \in M$ such that, for some fixed $\gamma >0$, 

$$ \varepsilon_j^{-n} \int_{B(x_j , \varepsilon_j )}  (u_j^+ )^{\bf p} \geq \gamma .$$

Lemma 3.4  then gives a function $\overline{u_j} = u_{j,1} + u_{j,2}$  such that $\overline{u_j} \in  \Sigma_{\varepsilon_j , {\bf m}_{\varepsilon } + 2 \delta_j }$,
$ u_{j,1}$ is supported inside a ball centerd at $x_j$, $ u_{j,1}$ and $ u_{j,2}$ have disjoint support and $ u_j  =  \overline{u_j}$ in $B(x_j , \varepsilon_j )$ and
outside $B(x_j , r )$.

We have that 

$$ \varepsilon_j^{-n} \int_{M}  (u_{j,1}^+ )^{\bf p} \geq \gamma $$

\noindent
and

$$ \varepsilon_j^{-n} \int_{M}  (u_{j,2}^+ )^{\bf p}>  \varepsilon_j^{-n} (1 - \eta )   \int_M (u_j^+ )^{\bf p}    \geq (1- \eta ) \frac{2{\bf p}}{{\bf p} -2} {\bf m}_{\varepsilon}  .$$

Then  it follows from Corollary 3.3  that there exists $\delta_1 >0$, independent of $j$, such that
$J_{\varepsilon_j } (\overline{u_j}  ) \geq  \Psi (\delta_1 ) {\bf m}_{\varepsilon}$ .

But for $j$ large enough we have that $J_{\varepsilon_j } (\overline{u_j}  ) < {\bf m}_{\varepsilon} + 2\delta_j  < \Psi (\delta_1 ) {\bf m}_{\varepsilon}$,
reaching a
contradiction.

\end{proof}

\section{The inclusion ${\bf i} : M \rightarrow \Sigma_{\epsilon , {\bf m}_{\varepsilon} + \delta} $ }

For any $r>0$ consider the continuous piecewise linear map $\varphi_r : \re \rightarrow [0,1]$ such that
$\varphi_r (t) = 1$ if $t\leq r$, $\varphi_r (t) = 0$ if $t\geq 2r$, $\varphi_r ' =-1/r$ on $[r,2r]$. 
Let $\varphi_r^n :\re^n \rightarrow [0,1]$ be given by $\varphi_r^n (x) =\varphi_r (\| x \| )$. 

As in the previous sections  $(M,g)$ is a closed Riemannian manifold. Let $r_0 >0$ be such that for any $x\in M$ 
and $0< r < r_0$ the geodesic ball of radius $ r$ in $(M,g)$ is 
strongly convex. Let $U_{\epsilon ,r}= \varphi_r^n \ U_{\epsilon}$ (where $U_{\varepsilon}$ is the function 
defined in Subsection 2.1).  Take $r<r_0 /2$ and  for any $x\in M$
identify the geodesic ball $B(x,2r)$ with the ball $B(0,2 r) \subset \re^n$ by the exponential map; we obtain a function
$U^x_{\epsilon, r} : M \rightarrow \re_{\geq 0}$ (which is supported in $B(x,2r)$).
 Let $t^x_{\epsilon ,r} = \lambda (  U^x_{\epsilon, r} ) \in \re_{>0}$ so that
$t^x_{\epsilon ,r} \ U^x_{\epsilon, r} \in N_{\epsilon}$. Let  ${\bf i}_{r,\varepsilon} (x)$ = $t^x_{\epsilon ,r } \ U^x_{\epsilon, r}$

\begin{Theorem} Fix $\delta >0$ and $r< r_0 /2$. There exists $\varepsilon_0 >0$ such that for any
$\varepsilon < \varepsilon_0$ we have ${\bf i}_{r,\varepsilon} (x) \in \Sigma_{\varepsilon , {\bf m} (E) + \delta}$.

\end{Theorem}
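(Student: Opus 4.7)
The plan is to show that $J_\varepsilon({\bf i}_{r,\varepsilon}(x)) \to {\bf m}(E)$ as $\varepsilon \to 0^+$, uniformly in $x \in M$; this at once implies $J_\varepsilon({\bf i}_{r,\varepsilon}(x)) < {\bf m}(E) + \delta$ for $\varepsilon$ small and all $x\in M$, which is the statement. Since ${\bf i}_{r,\varepsilon}(x) \in N_\varepsilon$, the Nehari identity lets me rewrite
$$ J_\varepsilon({\bf i}_{r,\varepsilon}(x)) \;=\; \frac{{\bf p}-2}{2{\bf p}}\,\varepsilon^{-n}\,(t^x_{\varepsilon,r})^{{\bf p}}\int_M (U^x_{\varepsilon,r})^{{\bf p}}\, dv_g, $$
so the whole question reduces to analysing the three quantities
$$ A(x,\varepsilon)=\varepsilon^{-n}\!\!\int_M (U^x_{\varepsilon,r})^{{\bf p}}\,dv_g,\quad B(x,\varepsilon)=\varepsilon^{-n}\!\!\int_M (U^x_{\varepsilon,r})^2\,dv_g,\quad C(x,\varepsilon)=\varepsilon^{-n}\!\!\int_M \varepsilon^2|\nabla U^x_{\varepsilon,r}|^2\,dv_g. $$

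Next I would work in $g$-normal coordinates centred at $x$, in which $g_{ij}(y)=\delta_{ij}+O(|y|^2)$ and $dv_g=(1+O(|y|^2))\,dy$, with the implicit constants controlled by the curvature tensor of $(M,g)$ and hence, by compactness of $M$, uniform in $x$. Rescaling $z=y/\varepsilon$ converts each of $A,B,C$ into an integral over the ball $B(0,2r/\varepsilon)\subset\re^n$ of $U^{{\bf p}}$, $U^2$, or $|\nabla U|^2$, respectively, multiplied by a power of the cutoff $\varphi_r(\varepsilon|z|)$ and a factor $(1+O(\varepsilon^2|z|^2))$. As $\varepsilon\to 0$ the cutoff tends pointwise to $1$, the domain exhausts $\re^n$, and the exponential decay of $U$ dominates the quadratic $\varepsilon^2|z|^2$ metric corrections, so dominated convergence gives
$$ A(x,\varepsilon)\to \|U\|_{{\bf p}}^{{\bf p}},\qquad B(x,\varepsilon)\to\|U\|_2^2,\qquad C(x,\varepsilon)\to\|\nabla U\|_2^2, $$
all uniformly in $x\in M$. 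The extra scalar-curvature contribution $\varepsilon^2({\bf s}_g/{\bf a})\,B(x,\varepsilon)$ is only $O(\varepsilon^2)$ and is absorbed in the error.

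Finally, the defining formula for the Nehari multiplier gives
$$ (t^x_{\varepsilon,r})^{{\bf p}-2} \;=\; \frac{C(x,\varepsilon)+(1+\varepsilon^2 {\bf s}_g/{\bf a})\,B(x,\varepsilon)}{A(x,\varepsilon)} \;\longrightarrow\; \frac{\|\nabla U\|_2^2+\|U\|_2^2}{\|U\|_{{\bf p}}^{{\bf p}}}=1, $$
the last equality coming from testing $-\Delta U+U=U^{{\bf p}-1}$ against $U$. Thus $t^x_{\varepsilon,r}\to 1$ uniformly in $x$ and therefore
$$ J_\varepsilon({\bf i}_{r,\varepsilon}(x)) \;\longrightarrow\; \frac{{\bf p}-2}{2{\bf p}}\,\|U\|_{{\bf p}}^{{\bf p}} \;=\; {\bf m}(E), $$
by formula (3), uniformly in $x$. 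The only delicate point is the uniformity in $x$: compactness of $M$ makes the implicit constants in the normal-coordinate expansions uniformly bounded, and the cutoff truncation at fixed radius $r$ is harmless because $U$ decays exponentially, so the mass it discards is $O(e^{-cr/\varepsilon})$.
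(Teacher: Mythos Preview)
Your proof is correct and follows essentially the same strategy as the paper: both show that $J_\varepsilon({\bf i}_{r,\varepsilon}(x))\to {\bf m}(E)$ uniformly in $x$ by transplanting the integrals to $\re^n$ via local metric comparison (the paper uses $(1/H)g_0\le g\le Hg_0$ on small balls, you use the quadratic normal-coordinate expansion), rescaling, and exploiting the concentration/exponential decay of $U_\varepsilon$. Your write-up is in fact more explicit than the paper's sketch, in particular your tracking of the Nehari multiplier $t^x_{\varepsilon,r}\to 1$ via $\|\nabla U\|_2^2+\|U\|_2^2=\|U\|_{\bf p}^{\bf p}$; the only cosmetic slip is writing $(1+\varepsilon^2{\bf s}_g/{\bf a})B(x,\varepsilon)$ with ${\bf s}_g$ outside the integral, but since ${\bf s}_g$ is bounded on the compact $M$ this does not affect the $O(\varepsilon^2)$ estimate.
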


\begin{proof} Fix $r< r_0 /2$. Since $M$ is compact there is a positive constant $H$ such that
on any geodesic ball of radius $r$ if we call $g_0$ the corresponding Euclidean metric then
$ (1/H)  g_0 \leq g \leq H g_0$. By taking $s<r$ small enough one can take the corresponding $H$ as
close to 1 as needed.  

The function $U_{\varepsilon}$ gets concentrated in 0 as $\varepsilon \rightarrow 0$. For $l=2$ or $l=p$, any fixed $s$, we have  

$$ \varepsilon^{-n} \int_{\re^n} U_{\varepsilon}^l = \int_{\re^n} U^l $$

$$ \varepsilon^{-n} \int_{\re^n -B(0,s)} U_{\varepsilon}^l = \int_{\re^n -B(0,s/\varepsilon )} U^l  
\rightarrow  0    \ \  ({\rm as} \   \varepsilon \rightarrow 0 )   $$

$$\varepsilon^{-n} \int_{\re^n}   \varepsilon^2 | \nabla U_{\varepsilon} |^2 = \int_{\re^n} | \nabla U |^2 $$

$$\varepsilon^{-n} \int_{\re^n -B(0,s)}   \varepsilon^2 | \nabla U_{\varepsilon} |^2 = \int_{\re^n -B(0,s/\varepsilon )} | \nabla U |^2 \rightarrow  0 \ \  ({\rm as} \   \varepsilon \rightarrow 0 )$$

Then $\lim_{\varepsilon \rightarrow 0} J_{\varepsilon} (U_{\varepsilon ,r} )= {\bf m}(E)$, and the integrals over 
$B(0,r)-B(0,s)$ converge to 0 uniformly. Then from the comments above picking $s$ small enough we
can make the integrals on the geodesic ball $B(x,s) \subset M$ as close as necessary to the corresponding integrals
on $\re^n$, and the theorem follows.

\end{proof}


\section{Center of mass}

Let $(M,g)$ be a closed Riemannian manifold. A subset $V \subset M$ is called strongly convex if any pair of points $x,y \in V$  are joined by a 
unique normal geodesic segment and the whole segment is contained in $V$. Since $M$ is closed the exists $r_0 >0$  so that for any
$x\in M$ and any $r \leq r_0$, the geodesic ball of radius $r$ centered at $x$, $B (x,r )$,  is
strongly convex.

Let $u \in L^1 (M)$ be nonnegative. Consider the function continuous $P_u : M \rightarrow \re$

$$P_u (x) = \int_M (d(x,y))^2  u(y) dv_g (y) .$$  

If $r$ is small enough 
and the support of $u$ is contained in $B(x,r)$  then H. Karcher and K. Grove
\cite{Grove} defined the Riemannian center of mass of the function $u$, which they call the center of mass of 
the measure given by $u \ dv_g$, as
the unique global minimum of the function $P_u$. Details can be found in  \cite[Section 1]{Karcher}: 
the function $P_u$ is strictly convex in a small ball and the minimum cannot be achieved outside such a ball. 
We will denote the Riemannian center of mass of such a function
$u$ by ${\bf cm} (u)$. If we denote by $L^{1,r} (M)$ the space of functions $u\in L^1 (M)$ with support in some
geodesic ball of radius $r$ (sufficiently small), note that ${\bf cm} : L^{1,r} (M) \rightarrow M$ is a continuous function:
$u \mapsto P_u$ defines a continuous map $L^1 (M) \rightarrow C^0 (M)$ with image in the family of functions with a unique
minimum. And the minimum depends continuously on the $C^0$-topology of such functions.

We are interested in extending the notion of Riemannian center of mass to functions which are not necessarily
supported in a small ball. This is not
possible in general since the function $P_u$ will in general have more than one minimum. But we will show that it is still possible
to give a good definition of center of mass for functions which are concentrated in small balls.

For a function $u \in L^1 (M)$ and a positive number $r$ let the {\it(u,r)-concentration function} be

$$ C_{u,r} (x)  =  \frac{\int_{B(x,r)} | u | dv_g}{\| u \|_1}$$

Note that $C_{u,r} : M \rightarrow [0,1]$, and it is a continuous function. Of course if $r\geq diam(M)$ then $C_{u,r} \equiv 1$, and
for any $x\in M$ we have $\lim_{r\rightarrow 0} C_{u,r} (x) =0$.

Now let
 {\it r-concentration coefficient} of $u$, $C_r (u)$, be the maximum of $C_{r,u}$: 

$$ C_r (u) = \sup_{x\in M} \frac{\int_{B(x,r)} | u | dv_g}{\| u \|_1}$$

For any  $\mu \in (0,1)$ let $L^1_{r, \mu}  (M,g) = \{ u\in L^1 (M) : C_r (u) > \mu \}$. We included $g$ in the notation since $C_{u,r}$, $C_r$ depend
on $g$. But we will consider $g$ fixed and write $L^1_{r, \mu}  (M,g) = L^1_{r, \mu}  (M)$ in the rest of the section.

For any $\eta \in (1/2 , 1)$ consider the piecewise linear continuous function  $\varphi_{\eta}: \re \rightarrow [0,1]$ such
that $\varphi_{\eta} (t) =0$ if $t\leq 1-\eta$, $\varphi_{\eta} (t) =1$ if $t \geq \eta$ and it is linear and increasing
in $[1-\eta , \eta ]$. 

Fix $r< (1/2)  r_0 $ and for any $u \in L^1_{r,\eta} (M)$  let 

$$\Phi_{r , \eta} (u)  (x) = \varphi_{\eta} (C_{u, r } (x) ) \ u(x) .$$

We have:

\begin{Lemma} For any $u\in L^{1}_{r,\eta} (M)$ the support of $ \Phi_{r , \eta} (u) $
is contained in a geodesic ball of radius $2r$ (centered at a point of maximal concentration).

\end{Lemma}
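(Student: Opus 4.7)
The plan is to take a point $x_0 \in M$ where the concentration function $C_{u,r}$ attains its maximum, and then show by a direct disjointness-of-balls argument that every point in the support of $\Phi_{r,\eta}(u)$ lies within distance $2r$ of $x_0$.

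First I would note that $C_{u,r} : M \to [0,1]$ is continuous and $M$ is compact, so the supremum defining $C_r(u)$ is attained at some $x_0 \in M$, and by hypothesis $C_{u,r}(x_0) = C_r(u) > \eta$. This $x_0$ is, by definition, a point of maximal concentration, and it is the basepoint of the $2r$-ball claimed in the statement.

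Next I would describe the support of $\Phi_{r,\eta}(u)$. By the defining formula $\Phi_{r,\eta}(u)(x) = \varphi_\eta(C_{u,r}(x)) \, u(x)$ and the fact that $\varphi_\eta$ vanishes on $(-\infty, 1-\eta]$, any point $x$ in the support of $\Phi_{r,\eta}(u)$ must satisfy $C_{u,r}(x) > 1-\eta$, i.e.\ $\int_{B(x,r)} |u| \, dv_g > (1-\eta)\, \|u\|_1$.

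The key step is a contradiction argument. Suppose for contradiction that $d(x_0, x) \geq 2r$. Then the open balls $B(x_0, r)$ and $B(x, r)$ are disjoint, so
\begin{equation*}
\|u\|_1 \;\geq\; \int_{B(x_0,r)} |u| \, dv_g \;+\; \int_{B(x,r)} |u| \, dv_g \;>\; \eta \, \|u\|_1 + (1-\eta)\, \|u\|_1 \;=\; \|u\|_1,
\end{equation*}
which is absurd. Hence $d(x_0, x) < 2r$, i.e.\ $x \in B(x_0, 2r)$, and since $x$ was an arbitrary point of the support this proves the lemma. There is no real obstacle here: the whole argument rests on the inequality $\eta + (1-\eta) = 1$ combined with the strict inequalities $C_{u,r}(x_0) > \eta$ and $C_{u,r}(x) > 1-\eta$, and the choice $\eta > 1/2$ is exactly what makes the two concentration conditions incompatible with disjointness.
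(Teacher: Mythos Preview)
Your proof is correct and follows essentially the same disjoint-balls argument as the paper's own proof: pick a point $x_0$ of maximal concentration, and observe that any point at distance $\geq 2r$ from $x_0$ has concentration at most $1-\eta$, hence lies outside the support of $\Phi_{r,\eta}(u)$. One very minor quibble: for points in the \emph{closure} of the nonzero set you only get $C_{u,r}(x)\geq 1-\eta$ rather than strict inequality, but since $C_{u,r}(x_0)>\eta$ is strict the contradiction $\|u\|_1>\|u\|_1$ still goes through.
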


\begin{proof} Suppose that $x$ is any point where $C_{u,r}$ has a maximum. 
Then $C_{u,r} (x) \geq \eta$. If $d(x,y) >2r$ then $B(y,r) \cap B(x,r) = \emptyset$ and therefore  $C_{u,r} (y) \leq 1-\eta$. By the definition 
of $\varphi_{\eta}$ we have that $\varphi_{\eta} (C_{u, r } (y) )=0$ and the support of  $ \Phi_{r , \eta} (u) $ is
contained in $B(x,2r)$ .

\end{proof}

Then we have

\begin{Theorem} For any $r< (1/2) r_0$ and $\eta >1/2$ there exists a continuous function ${\bf Cm}(r,\eta ): L^{1}_{r,\eta} (M) \rightarrow M$, such that
if $x\in M$ verifies that  $C_{r,u} (x) >\eta$ then ${\bf Cm}( r,\eta ) (u) \in B(x, 2r)$
\end{Theorem}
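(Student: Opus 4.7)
The plan is to define ${\bf Cm}(r,\eta)(u) := {\bf cm}(\Phi_{r,\eta}(u))$, where ${\bf cm}$ denotes the Grove--Karcher Riemannian center of mass recalled at the start of the section, applied to the truncated function $\Phi_{r,\eta}(u)$ defined just before Lemma 5.1. Lemma 5.1 guarantees that $\Phi_{r,\eta}(u)$ is supported in a geodesic ball of radius $2r$, so (choosing $r_0$ small enough at the outset that the Grove--Karcher construction applies in any geodesic ball of radius $\leq r_0$) the map is well defined.

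First I would verify the containment property, which is almost immediate. If $x \in M$ satisfies $C_{r,u}(x)>\eta$, then for every $y$ with $d(x,y)>2r$ the balls $B(x,r)$ and $B(y,r)$ are disjoint, so
$$\int_{B(y,r)} |u|\, dv_g \leq \|u\|_1 - \int_{B(x,r)} |u|\, dv_g < (1-\eta)\|u\|_1,$$
giving $C_{u,r}(y) < 1-\eta$ and hence $\varphi_{\eta}(C_{u,r}(y))=0$. Therefore $\Phi_{r,\eta}(u)$ is supported in $B(x,2r)$, which is strongly convex, and the strictly convex function $P_{\Phi_{r,\eta}(u)}$ attains its unique minimum inside $B(x,2r)$, giving ${\bf Cm}(r,\eta)(u)\in B(x,2r)$.

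The main step is continuity. The map $u\mapsto C_{u,r}$ is continuous from $L^1_{r,\eta}(M)$ to $C^0(M)$: if $u\to u_0$ in $L^1$ then $\|u\|_1\to\|u_0\|_1>0$ and $\bigl|\int_{B(x,r)}|u|\,dv_g-\int_{B(x,r)}|u_0|\,dv_g\bigr|\leq\|u-u_0\|_1$ uniformly in $x\in M$, so $\Phi_{r,\eta}\colon L^1_{r,\eta}(M)\to L^1(M)$ is continuous. To deduce continuity of ${\bf Cm}(r,\eta)$ at a fixed $u_0$, I would pick $x_0$ with $C_{u_0,r}(x_0)>\eta$; by the continuity just established, on a sufficiently small $L^1$-neighborhood of $u_0$ the inequality $C_{u,r}(x_0)>\eta$ persists, so by the containment claim above all the truncated functions $\Phi_{r,\eta}(u)$ in that neighborhood are supported in the common strongly convex ball $B(x_0,2r)$. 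On such a family the Grove--Karcher continuity of ${\bf cm}$ (noted in the text) applies directly.

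The main subtlety I anticipate is that the point of maximum concentration of $C_{u,r}$ can jump discontinuously as $u$ varies, so one cannot simply follow the maximum point and invoke the Grove--Karcher continuity. The remedy is exactly the containment property: it lets us use \emph{any} point of concentration above $\eta$ as a center for the common ball, and such a point persists under small $L^1$-perturbations, which is what makes the local argument above work.
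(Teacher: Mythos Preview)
Your proposal is correct and follows exactly the paper's approach: define ${\bf Cm}(r,\eta)(u)={\bf cm}(\Phi_{r,\eta}(u))$, invoke Lemma~5.1 for well-definedness, and use the Grove--Karcher result for the containment. The paper's own proof simply asserts continuity without elaboration, whereas you supply the (correct) local argument via a persistent concentration point; this extra care is sound but not a different route.
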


\begin{Definition} Any function ${\bf Cm} (r,\eta ) (u)$ as in Theorem 3.2 will be called a {\it (r,$\eta $)-Riemannian center of mass} of $u$.
\end{Definition}

\begin{proof} We define ${\bf Cm} (r,\eta)) (u)= {\bf cm} (\Phi_{r , \eta} (u))$, which is well defined by
\cite{Grove, Karcher} and Lemma 5.1. It is clearly a continuous function. If
$x\in M$ verifies that  $C_{r,u} (x) >\eta$ then by Lemma 3.1 
the function $\Phi_{r , \eta} (u)$ is supprted in $B(x,2r)$. Then by \cite[Theorem 1.2]{Karcher}
${\bf cm} (\Phi_{r , \eta} (u)) \in B(x,2r)$.

\end{proof}

\section{ The map ${\bf c} $ and the proof  Theorem 2.2 }

Fix ${\bf r} < (1/2) \  r_0 (M,g)$.  
Consider for instance $\eta = 0.9$ and let $\varepsilon_0 , \delta_0$ be as in Theorem 3.7.  
Theorem 4.1 and Lemma 2.1  imply that there exists $\varepsilon_1  \in (0,  \varepsilon_0 )$ such that for 
any $\varepsilon \in (0, \varepsilon_1 )$  we have a continuous map ${\bf i}_{\varepsilon }
: M \rightarrow \Sigma_{\varepsilon , {\bf m}_{\varepsilon} + \delta_0 }$. 

Since $\varepsilon < \varepsilon_0$ Theorem 3.7 says that if $u \in \Sigma_{\varepsilon , {\bf m}_{\varepsilon} + \delta_0  }$ then
$(u^+ )^p \in  L^1_{{\bf r} , 0.9 }$. Then by Theorem 5.2 we have a continuous map 
${\bf c}_{\varepsilon} : \Sigma_{\varepsilon , {\bf m}_{\varepsilon} + \delta_0 } \rightarrow M$,
${\bf c}_{\varepsilon} = {\bf Cm} ({\bf r}, 0.9 ) ( (u^+ )^p )$ .

For any $x\in M$ we have by construction that the support of ${\bf i}_{\varepsilon} (x)$ is contained
in $B(x, 2{\bf r}) $. Since $2 {\bf r} < r_0 (M,g)$ we have that  ${\bf c}_{\varepsilon}  \circ {\bf i}_{\varepsilon} (x) \in B(x, 2{\bf r} ) .$ 
Then we obtain a homotopy between  ${\bf c}_{\varepsilon}  \circ {\bf i}_{\varepsilon} : M \rightarrow M$ and $Id_M$ by following
the unique minimizing geodesic from ${\bf c}_{\varepsilon}  \circ {\bf i}_{\varepsilon} (x)$ to $x$. As mentioned in Subsection 2.2, 
this implies that $Cat (  \Sigma_{\varepsilon , {\bf m}_{\varepsilon} + \delta_0 } ) \geq Cat (M)$, proving Theorem 2.2 and therefore
Theorem 1.1.

\end{document}